\theoremstyle{plain}
\newtheorem{theorem}{Theorem}[section]
\newtheorem{proposition}[theorem]{Proposition}
\newtheorem{corollary}[theorem]{Corollary}
\newtheorem{lemma}[theorem]{Lemma}
\newtheorem*{claim*}{Claim}
\newtheoremstyle{case}{}{}{}{}{}{:}{ }{}
\theoremstyle{case}
\theoremstyle{definition}
\newtheorem{definition}[theorem]{Definition}
\newtheorem{example}[theorem]{Example}
\newtheorem{remark}[theorem]{Remark}
\newcommand \R{{\mathbb R}}
\newcommand \1{{\mathbf 1}}
\newcommand \K{{\mathcal K}}
\newcommand{\psd}{\mathcal{S}^n_+}
\newcommand\bigzero{\makebox(0,0){\text{\huge0}}}
\DeclareMathOperator{\CL}{\mathrm {CLC}}
\DeclareMathOperator{\iSL}{\mathring{SL}}
\DeclareMathOperator{\SL}{\mathrm{SL}}
\DeclareMathOperator{\inter}{int}
\newcommand \x{{\mathbf{x}}}
\def \a{{\mathbf{a}}}
\def \b{{\mathbf{b}}}
\def \a{{\mathbf{a}}}
\def \b{{\mathbf{b}}}
\title{$\K$-Lorentzian Polynomials}
\author{Grigoriy Blekherman and Papri Dey}
\address{School of Mathematics, Georgia Institute of Technology, Atlanta, GA 30332}
\date{}
\subjclass[2020]{14P99, 52A20}
\keywords{Lorentzian polynomials, convex cone, log-concavity}
\begin{document}

\begin{abstract}
Lorentzian polynomials are a fascinating class of real polynomials with many applications. Their definition is specific to the nonnegative orthant. Following recent work, we examine Lorentzian polynomials on proper convex cones. For a self-dual cone $\K$ we find a connection between $\K$-Lorentzian polynomials and $\K$-positive linear maps, which were studied in the context of the generalized Perron-Frobenius theorem. We find that as the cone $\K$ varies, even the set of quadratic $\K$-Lorentzian polynomials can be difficult to understand algorithmically. We also show that, just as in the case of the nonnegative orthant, $\K$-Lorentzian and $\K$-completely log-concave polynomials coincide.
\end{abstract}
\maketitle

\section{Introduction}

Lorentzian polynomials were introduced by Br\"{a}nd\'{e}n and Huh in \cite{Brandenlorentzian} and simultaneously by Anari, Oveis Gharan and Vinzant \cite{Cynthialog} (under the name of completely log-concave polynomials) as a generalization of hyperbolic polynomials that retain their log-concavity properties. They have already seen a multitude of applications \cite{MR4681146, Cynthialog3,MR4232134}. Lorentzian polynomials are specific to the nonnegative orthant $\mathbb{R}^n_{\geq 0}$. Lorentzian polynomials with respect to a full-dimensional convex cone $\K\subset \R^n$ were introduced by Br\"{a}nd\'{e}n and Leake in \cite{Leake} and by the second author. 

We consider what interesting properties of Lorentzian polynomials are maintained when we switch from the nonnegtave orthant to more general cones. The cone $\R^n_{\geq 0}$ is self-dual, and for any self-dual cone $\K$ there is an immediate connection between $\K$-Lorentzian polynomials and well-studied concept of $\K$-nonnegative linear maps \cite{Buckley}. This allows us to also consider properties of $\K$-irreducibility and $\K$-positivity, which have been studied in the context of generalizing the Perron-Frobenius theorem \cite{gantmacher, Vandergraftspectral}.

We take a detailed look at $\K$-Lorentzian quadratic forms for various self-dual cones $\K$. Quadratic Lorentzian polynomials are simple to understand, and so are  quadratic forms Lorentzian on the second order cone $\mathcal{L}_n$ which can be characterized via a simple semidefinite program. However, quadratic forms Lorenztian on the cone of positive semidefinite matrices $\mathcal{S}^n_+$ are more difficult. We show that a natural linear projection of $\mathcal{S}^n_+$-Lorentzian quadratic forms is the convex cone of degree $4$ globally nonnegative polynomials. From this we see that testing whether a quadratic form is $\mathcal{S}^n_+$-Lorentzian is NP-hard.

In analogy with results of \cite{Brandenlorentzian} and \cite{Cynthialog3} we show that $\K$-Lorentzian polynomials are equivalently completely log-concave polynomials (CLC) on $\K$. We also find some interesting properties of Hessians of $\K$-Lorentzian polynomials related to $\K$-nonnegativity and $\K$-irreducibility. We provide a different proof of a result of \cite{Leake} that testing whether a polynomial is $\K$-Lorentzian can be reduced to taking directional derivatives with respect to extreme rays of $\K$ and one interior point of $\K$, and examine the interior of the set of $\K$-Lorentzian polynomials.
The rest of the paper is structured as follows: in \Cref{sec:defnot} we set notation and main definitions. \Cref{sec:QKL} is devoted to a detailed examination of quadratic $\K$-Lorentzian polynomials; we also make a first connection to $\K$-nonnegative and $\K$-irreducible matrices. In \Cref{sec:equivalence} we examine the interior of the set of $\K$-Lorentzian polynomials, show equivalence between $\K$-Lorentzian and $\K$-completely log-concave polynomials and provide a sufficient condition for a sum of two $\K$-Lorentzian polynomials to be $\K$-Lorentzian. In \Cref{sec:selfdual} we expand on the connection to $\K$-irreducibility and $\K$-nonnegativity and show a simplified characterization of $\K$-Lorentzian polynomials.

\section{Definitions and Notation} \label{sec:defnot}

A nonempty convex set $\K \subseteq \R^{n}$ is said to be a cone if $c \K \subseteq \K$ for all $c \geq 0$. A cone $\K$ is called proper if it is  closed (in the Euclidean topology on $\R^{n}$), pointed (i.e.,
$\K \cap (-\K) = \{0\}$), and solid (i.e., the topological interior of $\K$, denoted as $\inter \K$, is nonempty). 

The vector space of real homogeneous polynomials (forms) in $n$ variables of degree $d$ is denoted by $\R[x]^d_n$. A form $f \in \R[x]_n^d$ is said to be \emph{log-concave} at a point $a\in \R^n$ if $f(a)>0$ and $\log f$ is a convex function at $a$, i.e. the Hessian of $\log f$ is negative semidefinite at $a$. 
A form $f$ is \emph{strictly log-concave} at $a$ if the Hessian of $\log f$ at $a$ is negative definite. A form $f \in \R[x]_n^d$ is log-concave on a proper cone $\K\subset \R^n$ if $f$ is log-concave at every point of the interior of $\K$. By convention, the zero polynomial is log-concave (but not strictly log-concave) at all points of $\R^n$.

We will denote quadratic forms by a lower case letter, and the matrix of the quadratic form by the corresponding upper case letter, i.e. if $q$ is a quadratic form, then its matrix is $Q$ and $q(x)=x^tQx$.

For a point $a\in \R^n$ and $f\in \R[x]_n^d$ we use $D_a f$ to denote the directional derivative of $f$ in direction $a$: $D_a f=\sum_{i=1}^n a_i\frac{\partial f}{\partial x_i}$. We now define $\K$-completely log-concave and $\K$-Lorentzian forms.

\begin{definition} \label{def:clc}
A form $f \in \R[x]_n^d$ is called a $\K$-completely log-concave ($\K$-CLC) on a proper convex cone $\K$ if for any choice of $a_{1}, \dots, a_{m} \in \K$, with $m\leq d$, we have that $D_{a_{1}} \dots D_{a_m} f$ is log-concave on $\inter \K$.  
A form $f \in \R[x]_n^d$ is strictly $\K$-CLC if for any choice of $a_{1}, \dots, a_{m} \in \K$, with $m\leq d$,  $D_{a_{1}} \dots D_{a_m} f$ is strictly log-concave on all points of $\K$. We use $\CL^d(\K)$ to denote the set of $\K$-CLC forms in $\R[x]_n^d$.
\end{definition}

\begin{definition} \label{def:pls}
Let $\K$ be a proper convex cone. A form $f \in \R[x]_n^d$ of degree $d\geq 2$ is said to be $\K$-Lorentzian if for any $a_{1},\dots,a_{d-2} \in  \inter \K$  the quadratic form $q=D_{a_{1}} \dots D_{a_{d-2}}f$ satisfies the following conditions:  
\begin{enumerate}
 \item  The matrix $Q$ of $q$ has exactly one positive eigenvalue.
\item  For any $x,y \in\inter \K$ we have $y^t Q x=\langle y,Qx\rangle>0$.
\end{enumerate}
For degree $d \leq 1$ a form is $\K$-Lorentzian if it is nonnegative on $\K$. We use $\SL^d(\K)$ to denote the set of $\K$-Lorentzian forms in $\R[x]_n^d$. 
\end{definition}
\begin{remark} \label{remark:defequiv}
\Cref{def:pls} is equivalent to the definition in \cite{Leake}, although we state it in a slightly different way. For a quadratic form $q$ with matrix $Q$ and for all $a,b\in \R^n$ we have $D_bD_a q=2b^tQa$. Therefore, we can also express condition (2) as $D_bD_aq>0$ for all $a,b \in \inter \K$. Combining with the fact that we have the freedom to choose directions of derivatives $a_1,\dots, a_{d-2} \in \inter \K$ we can express condition (2) as $D_{a_{1}} \dots D_{a_{d}} f>0$ for all
$a_{1},\dots,a_{d} \in  \inter \K$, as was done in \cite{Leake}.

We can also write conditions (1) and (2) above without relying on writing the matrix of $q$ in coordinates. Condition (1) can be stated via the variational definition of eigenvalues: there exists a point $x\in \R^n$ such that $q(x)>0$, and for any two dimensional subspace $V$ of $R^n$, there exists $v\in V$, $v\neq 0$ such that $q(v)\leq 0$. Condition (2) can also be stated in terms of the naturally associated bilinear form $B(x,y)$ of $q(x)$: there exists a unique bilinear form $B(x,y)$ such that $q(x)=B(x,x)$ for all $x\in \R^n$. We can equivalently say that $B(x,y)>0$ for all $x,y \in \inter \K$. 
\end{remark}

\section{Quadratic \texorpdfstring{$\K$}--Lorentzian polynomials} \label{sec:QKL}
In this section we analyze properties of quadratic $\K$-Lorentzian polynomials. Although these observations are mostly elementary, they point to interesting connections with other well-studied notions. We first describe several equivalent formulations for nonnegativity condition (2) in \Cref{def:pls}, which may be easier to verify.

\begin{lemma}\label{lem:red1}
A quadratic form $q(x)\in \R[x]_n^2$ is $\K$-Lorentzian on a proper convex cone $\K$ if and only if the matrix $Q$ of $q$ has exactly one positive eigenvalue and one of the following conditions holds:
\begin{enumerate}[(a)]
\item for all $x\in \K$ we have $Qx\in \K^*$, i.e. $Q(\K)\subseteq{\K^*}$.
     \item $y^tQx \geq 0$ for all $x,y$ spanning extreme rays of $\K$.
    \item $x^tQx > 0$ for all $x \in \inter \K$.
\end{enumerate}
   
\end{lemma}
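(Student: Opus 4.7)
My plan is to establish the equivalences by chaining together the implications
\[
(2) \Rightarrow (a) \Rightarrow (b) \Rightarrow (a) \Rightarrow (c) \Rightarrow (2),
\]
where $(2)$ refers to the original nonnegativity condition from \Cref{def:pls}; the assumption that $Q$ has exactly one positive eigenvalue is used where needed. The implication $(2) \Rightarrow (a)$ follows by continuity of the bilinear form together with $\overline{\inter \K} = \K$: passing from $y^tQx > 0$ on $\inter\K \times \inter\K$ to closures gives $y^tQx \geq 0$ on $\K \times \K$, which unwinds to $Q(\K) \subseteq \K^*$. The equivalence $(a) \Leftrightarrow (b)$ is routine: $(b)$ is a special case of $(a)$, and conversely every element of a pointed closed convex cone in $\R^n$ is a finite nonnegative combination of its extreme rays, so bilinearity promotes $(b)$ to $(a)$.

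The implication $(a) \Rightarrow (c)$ is where the one-positive-eigenvalue hypothesis first becomes essential. For $x \in \inter \K$ we have $Qx \in \K^*$, so $x^tQx = \langle x, Qx \rangle \geq 0$. If equality held, the standard duality fact that $\langle x, z \rangle > 0$ whenever $x \in \inter \K$ and $z \in \K^*\setminus\{0\}$ would force $Qx = 0$. I then rule this out: if $Qx_0 = 0$ for some $x_0 \in \inter \K$, then for any $u \in \R^n$ the perturbations $x_0 \pm \epsilon u$ lie in $\K$ for small $\epsilon > 0$, giving $\pm Qu \in \K^*$; pointedness of $\K^*$ forces $Qu = 0$, whence $Q \equiv 0$, contradicting the existence of a positive eigenvalue.

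The main step is $(c) \Rightarrow (2)$, which exploits the signature of $Q$ nontrivially. Diagonalizing $Q$ with positive eigenvalue $\lambda_1 > 0$ and remaining eigenvalues $-\mu_i \leq 0$, the open set $\{w : w^tQw > 0\}$ decomposes into exactly two disjoint open convex cones $C^\pm$ (the analogues of forward and backward ``time-like'' cones). Since $\inter \K$ is convex, hence connected, and lies in $\{w^tQw > 0\}$ by (c), it is trapped inside one component, say $C^+$. For $x, y \in C^+$ a reverse Cauchy--Schwarz argument --- applying ordinary Cauchy--Schwarz to bound $|\sum_{i \geq 2} \mu_i a_i b_i|$ in terms of $\sqrt{(\sum \mu_i a_i^2)(\sum \mu_i b_i^2)} < \lambda_1 a_1 b_1$, where $a, b$ are the eigenbasis coordinates of $x, y$ --- yields $y^t Q x > 0$, which is (2). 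The main obstacle is precisely this last implication: converting the diagonal positivity $x^tQx > 0$ into the off-diagonal positivity $y^tQx > 0$ requires the full Lorentzian structure, and fails in general without the one-positive-eigenvalue assumption (as one sees from e.g.\ $Q = I$ on a non-orthant cone).
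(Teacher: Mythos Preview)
Your proof is correct and follows essentially the same strategy as the paper: the implications involving (a) and (b) are handled identically (continuity, biduality, extreme-ray decomposition, and the ``$Qx_0=0$ forces $Q\equiv 0$'' kernel argument), and for (c) both you and the paper diagonalize $Q$ and use connectedness of $\inter\K$ to trap it inside one component of the Lorentz-type cone $\{q>0\}$. The only cosmetic difference is that the paper packages the last step as $(c)\Rightarrow(a)$ by explicitly describing $\bar{C}^*$ and checking $Q(\bar{C})\subseteq\bar{C}^*$, whereas you go $(c)\Rightarrow(2)$ directly via the reverse Cauchy--Schwarz inequality; these are the same computation in different clothing.
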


\begin{remark}
Note that conditions (b) and (c)  of Lemma \ref{lem:red1} are relaxations of Condition (2) from \Cref{{def:pls}}.
\end{remark}

\begin{proof}
(a) By the Biduality Theorem \cite[Chapter-IV]{Barvinok} $\langle{y,Qx}\rangle \geq 0$ for all $y\in \K$ if and only if $Qx\in \K^*$. If $Q(\K)=\{0\}$, then $Q$ is the zero matrix and $q$ is $\K$-Lorentzian. If $Q$ is not the zero matrix and $Qx=0$ for $x\in\inter \K$, then $Q(\K)$ contains a linear subspace. Since $Q(\K)\subseteq \K^*$ we see that $\K$ is not full-dimensional, which is a contradiction. Therefore $Qx\neq 0$ for any $x\in \inter \K$. Then for any $y\in \inter K$ we have $\langle y, Qx\rangle>0$.\\ \smallskip
(b) Since $\K$ is a proper convex cone, by the conical Minkowksi Theorem \cite[Chapter II]{Barvinok}, it is the conical hull of its extreme rays. Therefore $Q(\K)\subseteq \K^*$ if and only if for all $x \in \K$ spanning extreme rays of $\K$ we have $Qx\subset \K^*$. Additionally, we have $Qx\in \K^*$ if and only if for all $y$ spanning extreme rays of $\K$ we have $y^tQx\geq 0$.\\ \smallskip
(c) Let $v=(v_1,\dots,v_n) \in \inter \K$. Let $C$ be the connected component of the set $\{x\in \R^n \,\, \mid \,\, q(x)>0 \}$ which contains $v$. Then $C$ is an open convex cone \cite{Gaarding}, \cite{Gulerhypint} (which is the hyperbolicity cone of $q$), and its closure $\bar{C}$ is a closed, convex cone. Note that we have $\K \subset \bar{C}$, and we need to show that $Q(\K)\subseteq \K^*$. Since duality reverses inclusion, we have $\bar{C}^*\subseteq \K^*$. Therefore it suffices to show that  $Q(\bar{C})\subseteq \bar{C}^*$. 

Since $Q$ has exactly one positive eigenvalue, then up to an orthogonal change of variables, we can write $Q(x)=\lambda_1 x_1^2-\lambda_2 x_2^2-\dots-\lambda_k x_k^2$ with $k\leq n$ and $\lambda_i>0$. Furthermore we may assume $v_1>0$. Let $q'$ be the quadratic form $q'(x)=\frac{1}{\lambda_1} x_1^2-\frac{1}{\lambda_2} x_2^2-\dots-\frac{1}{\lambda_k} x_k^2$. We have 
\begin{align*}
&\bar{C}=\{(x_1,\dots,x_n) \in \R^n \,\, \mid \,\, x_1\geq 0,\, q(x) \geq 0 \},\\
&\bar{C}^*=\{(x_1,\dots,x_n) \in \R^n \,\, \mid \,\, x_1\geq 0,\, q'(x) \geq 0, \, x_{k+1}=\dots=x_n=0 \}.
\end{align*}
Take $x=(x_1,\dots,x_n) \in \bar{C}$. Let $y=Qx=(\lambda_1x_1,-\lambda_2x_2,\dots,-\lambda_kx_k,0,\dots,0)$. Observe that $y_1\geq 0$, $y_{k+1}=\dots=y_n=0$ and $q'(y)=q(x) \geq 0$. Therefore $y\in \bar{C}^*$ and the Lemma follows. 

\end{proof}

Next we show that nonnegativity condition (2) from \Cref{def:pls} can be even further relaxed to just testing nonnegativity of $q$ on the extreme rays of $\K$ in case the matrix $Q$ is nonsingular, and the set of extreme rays of $\K$ is path-connected. 

\begin{lemma}\label{lem:red2}
Let $q$ be a a quadratic form on $\R^n$ such that its matrix $Q$ is nonsingular and has exactly one positive eigenvalue. Let $\K$ be a proper convex cone in $\R^n$ such that the set of vectors spanning an extremal ray of $\K$ is path-connected. Then $q\in \SL^2(\K)$ if and only if $q(v)\geq 0$ for all $v$ spanning an extremal ray of $K$.
\end{lemma}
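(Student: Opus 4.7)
The forward direction is immediate from Lemma~\ref{lem:red1}(c): if $q \in \SL^2(\K)$ then $q > 0$ on $\inter \K$, and continuity of $q$ extends this to $q \geq 0$ on $\K$, and hence on all extremal ray vectors.

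For the converse, my plan is to verify condition (c) of Lemma~\ref{lem:red1}, so I need $q(x) > 0$ for every $x \in \inter \K$. Since $Q$ is nonsingular with exactly one positive eigenvalue, after an orthogonal change of variables I may write $q(x) = \lambda_1 x_1^2 - \lambda_2 x_2^2 - \cdots - \lambda_n x_n^2$ with every $\lambda_i > 0$. Let $C$ denote the connected component of $\{q > 0\}$ containing the positive $x_1$-axis; as noted in the proof of Lemma~\ref{lem:red1}, it is an open convex cone (the hyperbolicity cone of $q$), and its closure $\bar C = \{x_1 \geq 0,\ q(x) \geq 0\}$ is a closed convex cone. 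Because every $\lambda_i$ is strictly positive, a direct computation shows $\bar C \cap (-\bar C) = \{0\}$, so $\{q \geq 0\} \setminus \{0\}$ decomposes as the disjoint union of the relatively closed sets $\bar C \setminus \{0\}$ and $(-\bar C) \setminus \{0\}$ inside $\R^n \setminus \{0\}$.

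Each vector $v$ spanning an extremal ray of $\K$ is nonzero and satisfies $q(v) \geq 0$ by hypothesis, so it belongs to exactly one of these two disjoint pieces. The path-connectedness hypothesis then forces all extremal ray vectors into the same piece, and since $q(-x) = q(x)$ I may swap $\bar C$ with $-\bar C$ if necessary and assume they all lie in $\bar C$. As $\bar C$ is a closed convex cone and $\K$ is the conical hull of its extremal rays (conical Minkowski), this gives $\K \subseteq \bar C$. Finally, $\inter \K$ is a nonempty open subset of $\R^n$ contained in $\bar C$, so $\inter \K \subseteq \inter \bar C = C$, where $q > 0$; Lemma~\ref{lem:red1}(c) then yields $q \in \SL^2(\K)$.

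The step I expect to require the most care is the dichotomy $\bar C \cap (-\bar C) = \{0\}$: this is precisely where nonsingularity of $Q$ enters in an essential way, since if $Q$ had a nontrivial kernel the two nappes of $\{q \geq 0\}$ would meet along that kernel and path-connectedness of the extremal rays could no longer be used to isolate one of them. Everything else is routine: the signature assumption gives the hyperbolicity cone structure, path-connectedness performs the selection, and openness of $\inter \K$ upgrades the containment $\K \subseteq \bar C$ to strict positivity of $q$ on $\inter \K$.
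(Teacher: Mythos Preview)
Your proof is correct and follows essentially the same route as the paper's: diagonalize $q$, use nonsingularity to separate $\{q\ge 0\}\setminus\{0\}$ into two disjoint nappes, invoke path-connectedness of the extremal ray vectors to place them all in one nappe $\bar C$, and finish via conical Minkowski and Lemma~\ref{lem:red1}(c). Your write-up is in fact a bit more careful than the paper's (you state the forward direction, make the disconnection $\bar C\cap(-\bar C)=\{0\}$ explicit, and flag exactly where nonsingularity is used), but the underlying argument is the same.
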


\begin{remark}
The reduction to extreme rays is not possible for the nonnegative orthant $\R^n_{\geq 0}$ (the standard setting for Lorentzian polynomials), or for any other polyhedral cone $\K$. In these cases, the set of vectors spanning extreme rays of $\K$ is not connected. 
\end{remark}
\begin{proof}
Up to a change of coordinates we have $q(x)=x_1^2-x_2^2-\dots-x_n^2$. Let $v,w$ be two points spanning distinct extreme rays of $\K$. We see that the first coordinates $v_1$, $w_1$ must have the same sign, since there is a path connecting $v$ and $w$ which does not pass through the origin, such that for any point $t$ on the path we have $q(t)\geq 0$. Without loss of generality we may assume that for any $v$ spanning an extremal ray of $\K$ we have $v_1>0$. It follows that all extreme rays of $\K$ are contained in the proper convex cone $\bar{C}=\{(x_1,\dots,x_n) \in \R^n \,\, \mid \,\, x_1\geq 0,\, q(x) \geq 0 \}$. Therefore, the interior of $\K$ is contained in the interior of $\bar{C}$ and $q$ is strictly positive on the interior of $\bar{C}$. The Lemma now follows by Lemma \ref{lem:red1}.
\end{proof}

\begin{remark}
The hypothesis that $Q$ is nonsingular cannot be eliminated. Let $\ell$ be any linear form such that $\ell(v)=0$ for some $v\in \inter \K$, and let $q=\ell(v)^2$. Then $Q$ has exactly one positive eigenvalue, and $q$ is nonnegative on all the extreme rays of $\K$, but $q$ is not $\K$-Lorentzian. 
\end{remark}

Let $\K$ be a self-dual cone, i.e. $\K=\K^*$. We can use Condition (a) of \Cref{lem:red1} to make a connection between $\K$-Lorentzian quadratic forms and $\K$-nonnegative matrices (or equivalently linear maps).

\begin{definition} \label{def:k-irre}
Let $\K$ be a proper convex cone in $\R^{n}$. A $n\times n$ matrix $A$ is $\K$-nonnegative if $Ax \in \K$ for any $x \in \K$, i.e. $A(\K)\subseteq \K$. A matrix $A$ is $\K$-positive if $Ax \in \inter \K$ for any nonzero $x \in \K$. A $\K$-nonnegative matrix $A$ is called $\K$-irreducible if $A$ leaves no (proper) face
of $\K$ invariant. A $\K$-nonnegative matrix which is not $\K$-irreducible is called $\K$-reducible, cf. \cite{gantmacher}, \cite{Vandergraftspectral}.
\end{definition}
Here we record some equivalent characterizations to $\K$-irreducibility which will be used in the sequel  \cite{Vandergraftspectral}.
\begin{theorem} \label{them:irreducible} Let $A$ be a $\K$-nonnegative matrix for a solid (with nonempty interior) cone $\K$. The following statements are equivalent:
    \begin{enumerate}
    \item $A$ is $\K$-irreducible
    \item no eigenvector of $A$ lies on the boundary $\partial \K$ of $\K$.
    \item $A$ has exactly one eigenvector in $\K$, and this eigenvector is in $\inter \K$.
    \end{enumerate}
\end{theorem}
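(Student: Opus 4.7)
The plan is to establish the cycle $(1)\Rightarrow(3)\Rightarrow(2)\Rightarrow(1)$, with the Krein--Rutman (Perron--Frobenius) theorem as the main engine: every $\K$-nonnegative operator on a proper cone has a real nonnegative eigenvalue whose eigenvector lies in $\K$.

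For $(1)\Rightarrow(3)$, I would start with a Krein--Rutman eigenvector $v\in\K$, $Av=\lambda v$ with $\lambda\geq 0$. The key object is the smallest face of $\K$ containing $v$, which admits the concrete description $F_v=\{u\in\K : cv-u\in\K \text{ for some } c>0\}$. One checks $F_v$ is $A$-invariant via the identity $A(cv-u)=c\lambda v-Au\in\K$, with pointedness of $\K$ covering the corner case $\lambda=0$ (then $Au$ and $-Au$ both lie in $\K$, so $Au=0\in F_v$). Irreducibility rules out $F_v$ being proper, so $F_v=\K$ and $v\in\inter\K$. For uniqueness up to scaling, I would apply Krein--Rutman to $A^t$ on the proper cone $\K^*$ (which is $A^t$-invariant because $A(\K)\subseteq\K$) to obtain $w\in\K^*\setminus\{0\}$ with $A^tw=\mu w$. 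Any second eigenvector $u\in\K$ with eigenvalue $\lambda_u$ likewise lies in $\inter\K$ by the face argument, and the identity $\lambda_u\langle u,w\rangle=\langle Au,w\rangle=\langle u,A^tw\rangle=\mu\langle u,w\rangle$ together with $\langle u,w\rangle>0$ forces $\lambda_u=\mu=\lambda$. Then $u-tv$ is an eigenvector for every real $t$; letting $t^*$ be the first value at which $u-tv$ exits $\inter\K$ gives a nonzero eigenvector in $\partial\K$, contradicting the face conclusion unless $u-t^*v=0$, so $u$ is a scalar multiple of $v$.

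The implication $(3)\Rightarrow(2)$ is immediate: since $\partial\K\subseteq\K$, any eigenvector on $\partial\K$ would be an eigenvector in $\K$ lying outside $\inter\K$, violating (3). For $(2)\Rightarrow(1)$ I would argue by contrapositive. If $A$ leaves a proper face $F$ of $\K$ invariant, then $F$ is itself a proper cone in $\mathrm{span}(F)$, and the restriction $A|_{\mathrm{span}(F)}$ is $F$-nonnegative; Krein--Rutman applied to this restriction produces an eigenvector of $A$ inside $F\subseteq\partial\K$, violating (2).

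The main obstacle is the uniqueness half of $(1)\Rightarrow(3)$: the face argument cleanly yields existence of an interior eigenvector, but proving it is unique (up to scaling) among eigenvectors in $\K$ genuinely requires the detour through the dual eigenvector of $A^t$ on $\K^*$ together with the strict positivity of the pairing $\langle u,w\rangle$ on $\inter\K\times(\K^*\setminus\{0\})$. A secondary subtlety is the $\lambda=0$ case in the face-invariance computation, which is the one place pointedness (rather than mere solidity) of $\K$ is essential.
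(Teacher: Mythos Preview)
The paper does not prove this theorem at all: it is quoted verbatim as a known result from Vandergraft's paper on spectral properties of matrices leaving a cone invariant, and is used as a black box in the sequel. So there is no in-paper proof to compare against.

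That said, your sketch is correct and is essentially the classical argument. A few remarks. First, your flag about pointedness is well taken: although the theorem as stated in the paper only says ``solid'', the ambient Definition~3.4 of $\K$-nonnegativity already assumes $\K$ is proper (closed, pointed, solid), and both the Krein--Rutman step and the $\lambda=0$ case of your face-invariance computation genuinely need pointedness; closedness is used when you pass to $t^*$ and conclude $u-t^*v\in\partial\K$. Second, in $(2)\Rightarrow(1)$ you should record that a proper face $F$ satisfies $F\subseteq\partial\K$ (else $F$ would contain an interior point and hence equal $\K$) and that $A(F)\subseteq F$ forces $A(\mathrm{span}\,F)\subseteq\mathrm{span}\,F$, so that the restriction is a genuine linear operator on which Krein--Rutman can be invoked; you have this implicitly but it is worth one sentence. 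Third, in the uniqueness step your use of the dual eigenvector $w\in\K^*$ to pin down a common eigenvalue is exactly the standard device; note that $\K^*$ is proper precisely because $\K$ is, so Krein--Rutman applies to $A^t$ on $\K^*$. With these points made explicit, your argument is a complete proof of the cited theorem.
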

From \Cref{lem:red1} (a) we immediately see the following:

\begin{corollary}\label{cor:kpos}
    Let $\K$ be a self-dual cone. Then $q\in \SL^2(\K)$ if and only if the matrix $Q$ of $q$ has exactly one positive eigenvalue and $Q$ is $\K$-nonnegative.
\end{corollary}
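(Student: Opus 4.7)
The plan is essentially a direct substitution. By \Cref{lem:red1}(a), a quadratic form $q$ is $\K$-Lorentzian on a proper convex cone $\K$ if and only if the matrix $Q$ has exactly one positive eigenvalue and $Q(\K)\subseteq \K^*$. When $\K$ is self-dual we have $\K^*=\K$, so the containment $Q(\K)\subseteq \K^*$ becomes $Q(\K)\subseteq \K$, which is precisely the definition of $Q$ being $\K$-nonnegative (\Cref{def:k-irre}).

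So my proposed proof would be a single sentence: apply \Cref{lem:red1}(a) and invoke self-duality to rewrite the image condition. I would explicitly note both directions for clarity, but since the biconditional in \Cref{lem:red1}(a) does all the work, no additional argument is needed. There is no real obstacle here; the corollary is a packaging of the already-established \Cref{lem:red1}(a) in language suitable for the self-dual setting, so that the subsequent sections can freely move between the language of $\K$-Lorentzian quadratic forms and the language of $\K$-nonnegative linear maps (which is the stated goal of this part of the paper and the setup for the connection to the generalized Perron--Frobenius framework of \cite{gantmacher, Vandergraftspectral}).
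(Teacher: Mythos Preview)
Your proposal is correct and matches the paper's approach exactly: the paper simply states that the corollary follows immediately from \Cref{lem:red1}(a), which is precisely the substitution $\K^*=\K$ you describe.
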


We now show that if a matrix $Q$ of $q\in \SL^2(\K)$ is nonsingular and the cone $\K$ is self-dual, then $Q$ must be $\K$-irreducible.

\begin{lemma} \label{lemma:clcirre}
Let $\K \subset \R^{n}$ be a self-dual cone and $q \in \SL^{2}(\K)$ with matrix $Q$. If $Q$ is nonsingular, then $Q$ is $\K$-irreducible. 
\end{lemma}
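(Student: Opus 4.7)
The plan is to verify condition~(2) of \Cref{them:irreducible}: no eigenvector of $Q$ lies on the boundary $\partial \K$. By \Cref{cor:kpos}, self-duality of $\K$ gives that $Q$ is $\K$-nonnegative. Since $Q$ is symmetric, nonsingular, and has exactly one positive eigenvalue, its spectral decomposition has orthonormal eigenvectors $u_1,\dots,u_n$ with eigenvalues $\lambda_1 > 0 > \lambda_2,\dots,\lambda_n$.

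First I would dispatch the eigenvectors with negative eigenvalue. Suppose $\pm u_i \in \K$ for some $i \geq 2$. Then $\K$-nonnegativity forces $Q(\pm u_i) \in \K$, while $Q(\pm u_i) = \lambda_i(\pm u_i) \in -\K$ since $\lambda_i < 0$. Pointedness of $\K$ then gives $u_i = 0$, a contradiction. As $\partial \K \subset \K$, this already rules out $\pm u_i \in \partial \K$ for all $i \geq 2$.

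For the Perron eigenvector $u_1$, suppose for contradiction that $u_1 \in \partial \K$ (the case $-u_1 \in \partial \K$ is identical). Since $u_1 \in \partial \K$ and $\K$ is convex, there is a supporting hyperplane at $u_1$, i.e.\ a nonzero $w \in \K^* = \K$ with $\langle w, u_1 \rangle = 0$. Expanding $w = \sum_i c_i u_i$ in the orthonormal eigenbasis, orthogonality to $u_1$ gives $c_1 = 0$, whence
\[
q(w) = \sum_{i \geq 2} \lambda_i c_i^2 \leq 0.
\]
On the other hand, $q \in \SL^2(\K)$ is strictly positive on $\inter \K$, so by continuity $q(w) \geq 0$ for all $w \in \K$. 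Hence $q(w) = 0$, and since every $\lambda_i$ with $i \geq 2$ is strictly negative, this forces $c_i = 0$ for all $i$, contradicting $w \neq 0$.

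The crux of the argument, and the only place where self-duality is essential, is this last step: self-duality converts a supporting hyperplane of $\K$ at $u_1$ into a nonzero vector $w \in \K$ orthogonal to $u_1$, after which the Lorentzian signature $(1, n-1)$ forces $w = 0$. With condition~(2) of \Cref{them:irreducible} verified, $Q$ is $\K$-irreducible.
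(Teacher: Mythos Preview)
Your proof is correct and follows essentially the same route as the paper. Both arguments reduce to the same key step: using self-duality to produce a nonzero $w \in \K$ orthogonal to the Perron eigenvector, and then observing that the Lorentzian signature makes $q$ negative definite on $u_1^\perp$, contradicting $q \geq 0$ on $\K$. Two minor remarks: (i) you are a bit more explicit than the paper in dispatching the negative-eigenvalue eigenvectors, which the paper leaves implicit; (ii) your last step can be shortened --- since each $\lambda_i < 0$ for $i \geq 2$ and $w \neq 0$, you already have $q(w) < 0$ strictly, which contradicts $q(w) \geq 0$ without passing through $q(w) = 0$.
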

\begin{proof}
It suffices to show that a symmetric, nonsingular $\K$-nonnegative matrix $Q$ with exactly one positive eigenvalue must be $\K$-irreducible.
Let $v$ be the eigenvector associated with the positive eigenvalue $\alpha$ of $Q$. Suppose that neither $v$ nor $-v$ lie in the interior of $\K$. Since $\K$ is self-dual, there exists a nonzero vector $w\in \K$ such that $\langle v,w\rangle=0$. Since $Q$ is nonsingular, we see that the quadratic form $q$ is negative definite on the orthogonal complement of $v$, and in particular, we have $q(w)<0$. This is a contradiction, since $q$ is nonnegative on $\K$. The Lemma now follows by \Cref{them:irreducible}. \end{proof}
Combining \Cref{cor:kpos} and \Cref{lemma:clcirre} we immediately obtain the following:
\begin{corollary} \label{cor:qclcirre}
Let $\K\subset \R^n$ be a self-dual cone. Then $q\in \SL^2(\K)$ if and only if the matrix $Q$ of $q$ has exactly one positive eigenvalue and $Q$ is either nonsingular and $\K$-irreducible, or singular and $\K$-nonnegative.
\end{corollary}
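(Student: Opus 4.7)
The plan is to derive the corollary purely by combining the two preceding results, splitting into cases based on whether $Q$ is singular.

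For the forward direction, suppose $q \in \SL^2(\K)$. By \Cref{cor:kpos}, the matrix $Q$ has exactly one positive eigenvalue and is $\K$-nonnegative. If $Q$ is singular, we are already in the second case of the statement. If $Q$ is nonsingular, \Cref{lemma:clcirre} directly yields that $Q$ is $\K$-irreducible, placing us in the first case.

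For the reverse direction, suppose $Q$ has exactly one positive eigenvalue. In the first case, $Q$ is nonsingular and $\K$-irreducible, which in particular means $Q(\K)\subseteq \K$, so $Q$ is $\K$-nonnegative; \Cref{cor:kpos} then gives $q\in \SL^2(\K)$. In the second case, $Q$ is singular and $\K$-nonnegative, and \Cref{cor:kpos} again gives $q\in \SL^2(\K)$.

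There is essentially no obstacle here: the content of the corollary is a repackaging of \Cref{cor:kpos} (which handles the characterization via $\K$-nonnegativity) together with \Cref{lemma:clcirre} (which upgrades $\K$-nonnegativity to $\K$-irreducibility in the nonsingular case). The only subtlety to mention is that \Cref{lemma:clcirre} runs in one direction only — from $\K$-Lorentzian nonsingular to $\K$-irreducible — which is why the reverse implication uses only the weaker fact that $\K$-irreducibility implies $\K$-nonnegativity, appealing again to \Cref{cor:kpos}.
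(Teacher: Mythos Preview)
Your proof is correct and follows exactly the approach the paper indicates: it simply combines \Cref{cor:kpos} with \Cref{lemma:clcirre}, splitting on whether $Q$ is singular. The paper in fact gives no further argument beyond stating that the corollary is immediate from these two results, so your write-up merely makes explicit the case analysis that the paper leaves to the reader.
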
 

\subsection{Quadratic Lorentzian Polynomials on \texorpdfstring{$\R^n_{\geq 0}$}{} and the Second order cone.} Quadratic forms Lorentzian on the nonnegative orthant $\R^n_{\geq 0}$ (the usual Lorenztian polynomials) are simply quadratic forms with nonnegative coefficients and exactly one positive eigenvalue (see e.g. \Cref{lem:red1} (b)). Therefore it is easy to test whether a quadratic form is $\R^n_{\geq 0}$-Lorentzian. It was recently shown in \cite{chin2024real} that while testing whether a polynomial is Lorentzian is easy in any degree, testing whether a polynomial is stable (hyperbolic on the nonnegative orthant) is a diffciult computational problems starting in degree $3$.

The second order cone $\mathcal{L}_n\subset \R^n$ is defined by $$\mathcal{L}_n=\{(x_1,\dots,x_n)\in \R^n \,\, \mid \,\, x_1^2\geq x_2^2+\dots+x_n^2 \}.$$ 
Let $B$ be the diagonal matrix $n\times n$ matrix with $1$ in the $(1,1)$-entry and $-1$ in all other diagonal entries. It is well-known that by the S-Lemma \cite{Polik2007survey}, \cite{Loewyice} a quadratic form $q$ with matrix $Q$ is nonnegative on $\mathcal{L}_n$ if and only if there exists a nonnegative $\lambda \in \R$ such that $Q-\lambda B$ is positive semidefinite. Note that $\lambda>0$ would imply that $q$ is strictly positive on the interior of $\mathcal{L}_n$, and therefore if $q$ has exactly one positive eigenvalue, then $q$ is $\mathcal{L}_n$-Lorentzian (\Cref{lem:red1} (c)). If $\lambda=0$, then $Q$ is positive semidefinite, and if $Q$ has exactly one positive eigenvalue, then $q=\langle v,x\rangle^2$ for some $v\in \R^n$. In this case $q$ is $\mathcal{L}_n$-Lorentzian if and only if $v \in \mathcal{L}_n$, since the second order cone is self-dual. Therefore we can test whether a quadratic form is $\mathcal{L}_n$-Lorentzian by semidefinite programming and establishing the signature of $Q$.

However, as we will see below, there are self-dual cones $\K$ for which the set of $\K$-Lorenztian quadratic forms is complicated. We now turn to the cone of positive semidefinite matrices $\psd$.

\subsection{Quadratic Lorentzian Polynomials on \texorpdfstring{$\psd$}{}} 
We consider quadratic forms Lorentzian on the cone $\mathcal{S}^n_+$ of $n\times n$ positive semidefinite symmetric matrices. Let $\mathcal{S}^n$ denote the vector space of real symmetric $n\times n$ matrices equipped with the standard inner product $\langle A,B \rangle =\operatorname{trace} AB$. It is well-known that the cone $\mathcal{S}^n_+$ is self-dual with the respect to the standard inner product \cite[Chapter II.12]{Barvinok}. Linear maps that are $S^n_+$-nonnegative are known as \emph{positive linear maps}, and they are also related to nonnegative biquadratic forms on $\mathbb{R}^n\times \mathbb{R}^n$. We describe this connection now.

Let $V$ be the vector space of linear maps from $\mathcal{S}^n$ to itself. 
Note that here we are not restricting the matrices of linear transformations to be symmetric, and we have $\dim V=\binom{n}{2}^2$. A linear map $L:\mathcal{S}^n\rightarrow \mathcal{S}^n$ is called \emph{a positive linear map} if $L(\mathcal{S}^n_+)\subseteq \mathcal{S}^n_+$. It follows from \Cref{cor:kpos} that quadratic $\mathcal{S}^n_+$-Lorentzian polynomials correspond to positive linear maps whose matrix is symmetric and has exactly one positive eigenvalue. 

Let $W$ be the vector of real biquadaratic forms on $\mathbb{R}^n\times \mathbb{R}^n$, i.e. homogeneous polynomials in variables $(x_1,\dots,x_n)$, $(y_1,\dots,y_n)$ which are quadratic in $x$ and $y$ variables. Note that $\dim W=\binom{n}{2}^2$. We can define a linear map $\Psi$ from $V$ to $W$.
$$\Psi: V\rightarrow W, \,\,\, \text{via} \,\,\, \Psi(L)=\langle yy^t, L(xx^t) \rangle.$$

It is well-known that $\Psi$ is a linear isomorphism and a map $L:\mathcal{S}^n\rightarrow \mathcal{S}^n$ is positive if and only if the associated biquadratic form $\Psi(L)$ is nonnegative on $\mathbb{R}^n\times \mathbb{R}^n$ \cite{Buckley}.  

Let $V'$ be the vector space of quadratic forms on $\mathcal{S}^n$, which we can equivalently see as a subspace of $V$ consisting of linear maps $\mathcal{S}^n\rightarrow \mathcal{S}^n$ with a symmetric matrix. It follows that $\Psi(V')$ is the vector space $W'$ of biquadratic forms which are symmetric under swapping $x$ and $y$. We see from the above discussion that the image of $\SL^{2}(\mathcal{S}^n_+)$ under $\Psi$ is a non-convex subset of the cone of nonnegative biquadratic forms in $W'$. It is not clear to us if the signature condition of exactly one positive eigenvalue has a natural translation in terms of biquadratic forms.

 However, we can use the criterion of Lemma \ref{lem:red2} to provide a simpler criterion for nonsingular quadratic forms to be $\mathcal{S}^n_+$-Lorentzian , which will give us a different linear map, where we will have full understanding of the image of $\SL^{2}(\mathcal{S}^n_+)$

\begin{corollary}\label{cor:pos}
   Let $q$ be a quadratic form on $\mathcal{S}^n$ whose matrix $Q$ is nonsingular and has exactly one positive eigenvalue. Then $q$ is $\mathcal{S}^n_+$-Lorentzian if and only if the associated quartic polynomial $q(xx^t)$ is nonnegative for all $x\in \mathbb{R}^n$.
\end{corollary}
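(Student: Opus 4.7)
The plan is to apply \Cref{lem:red2} directly, taking $\K = \mathcal{S}^n_+$. To do so, I need to identify the extreme rays of $\mathcal{S}^n_+$ and verify that the set of vectors spanning those extreme rays is path-connected.

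First, I would recall the standard fact that the extreme rays of $\mathcal{S}^n_+$ are precisely the rays spanned by rank-one positive semidefinite matrices, i.e.\ matrices of the form $xx^t$ for some nonzero $x \in \mathbb{R}^n$. Thus the set of matrices spanning extreme rays of $\mathcal{S}^n_+$ is
\[
E = \{\, xx^t : x \in \mathbb{R}^n \setminus \{0\} \,\}.
\]
This set is the image of $\mathbb{R}^n \setminus \{0\}$ under the continuous map $x \mapsto xx^t$. Since $\mathbb{R}^n \setminus \{0\}$ is path-connected for $n \geq 2$ (and the case $n=1$ is immediate as $\mathcal{S}^1_+ = \R_{\geq 0}$ and the corollary is trivial), the continuous image $E$ is also path-connected.

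With these hypotheses in place, \Cref{lem:red2} applies to $q$, and yields that $q \in \SL^2(\mathcal{S}^n_+)$ if and only if $q(v) \geq 0$ for every $v$ spanning an extremal ray of $\mathcal{S}^n_+$. Translating through the description of $E$, this is exactly the condition that $q(xx^t) \geq 0$ for every $x \in \mathbb{R}^n$ (the value at $x=0$ being trivially zero). This establishes the equivalence.

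I do not anticipate a substantive obstacle: both the description of the extreme rays of $\mathcal{S}^n_+$ and their path-connectedness are standard, so the proof is essentially a direct invocation of \Cref{lem:red2}. The only place where one might want to be slightly careful is noting that the nonsingularity hypothesis on $Q$ is what allows the reduction to extreme rays in the first place, as emphasized in the remark following \Cref{lem:red2}.
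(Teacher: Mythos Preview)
Your proof is correct and follows exactly the paper's approach: the paper also identifies the extreme rays of $\mathcal{S}^n_+$ as the rank-one matrices $xx^t$ and then invokes \Cref{lem:red2} directly. You simply spell out the path-connectedness verification that the paper leaves implicit.
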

\begin{proof}
    The extreme rays of the cone $\mathcal{S}^n_+$ are well-known to be spanned by rank $1$ matrices of the form $xx^t$ for $x\in \R^n$ \cite[Chapter II.12]{Barvinok}. Now we apply \Cref{lem:red2}.
\end{proof}

Motivated by the above result we define the linear map $\Phi: V' \rightarrow \mathbb{R}[x]_n^4$ given by $\Phi(q)=q(xx^t)$. We can define a natural linear projection from the vector space $W$ of biquadratic forms on $\R^n\times\R^n$ to the vectors space of quartic polynomials $\R[x]_n^4$ by sending a biquadratic form $T(x,y)$ to the quartic polynomial $T(x,x)$. We can see that $$\Phi=\pi\circ \Psi,$$ 
i.e. the quartic polynomial $q(xx^t)$ can be obtained by plugging in $y=x$ in the associated biquadratic form $\Psi(q)$. Interestingly, the image of $\SL^{2}(\mathcal{S}_+^n)$ under $\Phi$ is the entire cone $P_{n,4}$ of nonnegative quartics on $\R^n$, and in particular it is convex.

\begin{theorem}\label{thm:proj}
We have $$\Phi(\SL^{2}(\mathcal{S}^n_+))=P_{n,4},$$
and moreover the same holds if we replace $\SL^{2}(\mathcal{S}^n_+)$ with nondegenerate quadratic forms in $\SL^{2}(\mathcal{S}^n_+)$.
\end{theorem}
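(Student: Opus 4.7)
The plan is to establish both inclusions, the first being routine and the second the substance of the result. For the inclusion $\Phi(\SL^{2}(\mathcal{S}^n_+)) \subseteq P_{n,4}$: any $q \in \SL^{2}(\mathcal{S}^n_+)$ is nonnegative on $\mathcal{S}^n_+$ (condition (2) of \Cref{def:pls} extends by continuity from $\inter \mathcal{S}^n_+$ to all of $\mathcal{S}^n_+$, using self-duality of the PSD cone), and every rank-one matrix $xx^t$ lies in $\mathcal{S}^n_+$, so $q(xx^t) \geq 0$ for all $x \in \R^n$. The reverse inclusion is the content of the theorem; given $p \in P_{n,4}$ I will construct a nondegenerate $q \in \SL^{2}(\mathcal{S}^n_+)$ with $\Phi(q) = p$, which also yields the ``moreover'' statement.

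The map $\Phi$ is surjective: every quartic monomial $x^\alpha$ with $|\alpha| = 4$ factors as $x^\beta x^\gamma$ with $|\beta| = |\gamma| = 2$, and each factor $x^\beta$ is an entry of $xx^t$, so matching coefficients produces some $q_0 \in V'$ with $\Phi(q_0) = p$. This $q_0$ need not have the correct signature, so I will correct it by adding an element of $\ker \Phi$. The key candidate is the quadratic form on $\mathcal{S}^n$ given by
\[
r(A) = (\tr A)^2 - \tr(A^2).
\]
Since $\tr(xx^t) = \|x\|^2$ and $\tr((xx^t)^2) = \|x\|^4$, we have $r(xx^t) = 0$, so $r \in \ker \Phi$. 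Decomposing $A = c I + A'$ with $A'$ traceless gives $r(A) = n(n-1)c^2 - \|A'\|_F^2$, so the matrix $R$ of $r$ is nonsingular, has exactly one positive eigenvalue (in the direction of $I$), and has $\binom{n+1}{2} - 1$ negative eigenvalues.

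Now set $q_M := q_0 + M r$ for $M > 0$. Since $r \in \ker \Phi$, we have $\Phi(q_M) = p$ for every $M$. As $M \to \infty$, the rescaled matrix $M^{-1} Q_0 + R$ tends to $R$, so by continuity of eigenvalues the matrix $Q_M = Q_0 + M R$ is nonsingular and has exactly one positive eigenvalue for all sufficiently large $M$. Since $q_M(xx^t) = p(x) \geq 0$ for every $x \in \R^n$, \Cref{cor:pos} gives $q_M \in \SL^{2}(\mathcal{S}^n_+)$, and $q_M$ is nondegenerate. The main conceptual step is identifying the right kernel element: one needs $r \in \ker \Phi$ whose matrix has exactly one positive eigenvalue, so that large positive multiples of $r$ overwhelm $Q_0$ and force the correct signature onto $Q_M$; the form $(\tr A)^2 - \tr(A^2)$ is the natural candidate (generalizing $\det A$ in the $n = 2$ case) and the signature computation above confirms that it works in all dimensions.
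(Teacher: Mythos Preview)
Your proof is correct and follows essentially the same approach as the paper: both arguments prove surjectivity by choosing a preimage of $p$ and adding a large multiple of a kernel element $r$ whose matrix is nonsingular with exactly one positive eigenvalue, then invoking \Cref{cor:pos}. Your kernel element $r(A)=(\tr A)^2-\tr(A^2)$ is in fact (twice) the paper's form $\sum_{i<j}(x_{ii}x_{jj}-x_{ij}^2)$, the sum of the $2\times 2$ principal minors; your coordinate-free signature computation via the decomposition $A=cI+A'$ is a nice alternative to the paper's explicit block matrix.
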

\begin{proof}
It follows from the above discussion that the image of $\SL^2(\mathcal{S}^n_+)$ under $\Phi$ lies inside $P_{n,4}$. 

Let $r$ be the following quadratic form on $\mathcal{S}^n$: $$r(X)=\sum_{1\leq i<j\leq n} x_{ii}x_{jj}-x^2_{ij},$$
i.e. $r$ is the sum of all $2\times 2$ principal minors of a matrix $X\in \mathcal{S}^n$. It follows that $\Phi(r)=r(xx^t)=0$. If we coordinatize $ \mathcal{S}^n$ by first listing diagonal elements (in any order) and then off-diagonal elements (in any order), then the matrix $R$ of $r$ becomes:
$$R=\begin{pmatrix} \begin{matrix} 0 & 1/2 & \dots & 1/2 & \\ 1/2 & 0 & \dots & 1/2 \\ \vdots & \vdots & \vdots & \vdots \\ 1/2 & 1/2 & \dots & 0  \end{matrix} & \bigzero \\ \bigzero & \begin{matrix} -1 & 0 & \dots  & 0 \\ 0 & -1 & \dots & 0 \\ \vdots & \vdots & \vdots & \vdots \\ 0 & 0 & \dots & -1 \end{matrix} \end{pmatrix}.$$
We see that the matrix $R$ is nonsingular with exactly one positive eigenvalue. Since the map $\Phi$ is onto $\R[x]_n^4$, given a polynomial $p \in P_{n,4}$, there exists a quadratic form $a\in V'$, with matirx $A$, such that $\Phi(a)=p$. Now let $A'=T+tR$ and let $a'$ be the associated quadratic form. For a sufficiently large $t$ we see that $A'$ is nonsingular with exactly one positive eigenvalue, and therefore by \Cref{cor:pos} we have that $a'\in \SL^2(\mathcal{S}^n_+)$ and $\Phi(a')=\Phi(a)=p$.
\end{proof}
It quickly follows from the above result that testing whether a quadratic form is $\mathcal{S}^n_+$-Lorentzian is NP-hard.
\begin{corollary}
    It is NP-hard to test whether a quadratic form is  $\mathcal{S}^n_+$-Lorentzian.
\end{corollary}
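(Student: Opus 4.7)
The plan is to reduce from a problem already known to be NP-hard, namely testing whether a quartic form $p\in \R[x]_n^4$ is nonnegative on $\R^n$. This is a classical result: by the Murty--Kabadi reduction, checking copositivity of a symmetric matrix $M$ (which is NP-hard) is equivalent to testing nonnegativity of the quartic $\sum_{i,j} M_{ij}x_i^2 x_j^2$. So it suffices to exhibit a polynomial-time reduction from quartic nonnegativity to $\mathcal{S}^n_+$-Lorentzianness of a quadratic form on $\mathcal{S}^n$.

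Given an arbitrary input quartic $p\in \R[x]_n^4$, the reduction works as follows. First, since the linear map $\Phi\colon V'\to \R[x]_n^4$, $\Phi(q)=q(xx^t)$, is surjective (as established in the proof of \Cref{thm:proj}), we can solve a linear system in polynomial time to produce some quadratic form $a\in V'$ with matrix $A$ satisfying $\Phi(a)=p$. This $a$ is in general neither $\mathcal{S}^n_+$-Lorentzian nor has the required signature, so we perturb using the special form $r$ from the proof of \Cref{thm:proj}: recall that $r(X)=\sum_{i<j}(x_{ii}x_{jj}-x_{ij}^2)$ has $\Phi(r)=0$, and its matrix $R$ is nonsingular with exactly one positive eigenvalue. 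Setting $a'=a+tr$ we have $\Phi(a')=p$ for every $t$. For $t$ large enough, $A'=A+tR$ inherits the signature of $tR$, is nonsingular, and has exactly one positive eigenvalue; by \Cref{cor:pos}, $a'$ is $\mathcal{S}^n_+$-Lorentzian if and only if $\Phi(a')=p$ is nonnegative on $\R^n$. This is precisely the reduction we need.

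The main obstacle is verifying that this reduction is genuinely polynomial-time, i.e. that the perturbation parameter $t$ can be chosen with bit-length polynomial in the input size of $p$. I would address this by deriving an explicit eigenvalue bound for $A'$ of the form $\lambda_k(A+tR)\geq t\lambda_k(R)-\|A\|$, using Weyl's inequality, where $\|A\|$ admits a standard polynomial-size bound in terms of the coefficients of $p$ (since $A$ is produced by solving a linear system with polynomial-size entries). A choice of $t$ only slightly larger than $\|A\|/\min_k|\lambda_k(R)|$ then suffices to guarantee both nonsingularity and the correct signature; such a $t$ has bit-length polynomial in the input. The rest of the reduction — solving a linear system for $a$ and writing down $A+tR$ — is elementary computational linear algebra, so the total reduction runs in polynomial time, and NP-hardness follows.
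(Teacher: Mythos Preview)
Your proposal is correct and follows essentially the same route as the paper: reduce from quartic nonnegativity by lifting $p$ to a quadratic form on $\mathcal{S}^n$ via $\Phi^{-1}$, then shift by $tR$ to force the right signature and invoke \Cref{cor:pos}. The only cosmetic differences are that the paper writes down an explicit canonical preimage (splitting each quartic coefficient evenly among the quadratic monomials mapping to it) rather than appealing to a generic linear solve, and that you are more explicit about the bit-size of $t$ via Weyl's inequality, whereas the paper simply asserts that $t$ is easy to estimate.
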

\begin{proof}
   Note that the projection $\Phi$ sending a quadratic form $q$ to $q(xx^t)$ simply acts by substituting $x_{ij}=x_ix_j$ into $q$. Under this map, several quadratic monomials $x_{ij}x_{k\ell}$ may map to the same degree 4 monomial, since for example, $x_1^2\cdot x_2^2=(x_1x_2)^2$. Therefore, in coordinates, $\Phi$ simply adds several coefficients of monomials of $q$ to produce a coefficent of the quartic polynomial $\Phi(q)$, and these sums are taken in disjoint groups.
    
    Given a polynomial $p\in \R[x]_n^4$ we can define a canonical preimage $m\in \Phi^{-1}(p)$ as follows. Let $x^a$ be a degree $4$ monomial in $p$ with coefficient $c_a$. Define $d_a$ to be the number of possible quadrartic monomial $x_{ij}x_{k\ell}$ that produce $x^a$ under the substitution $x_{ij}=x_ix_j$. Then coefficient of $x_{ij}x_{k\ell}$ in $m$ is simply $\frac{c_a}{d_a}$ where $a=\Phi(x_{ij}x_{k\ell})$. By construction we have $\Phi(m)=p$.

     Then, as in proof of \Cref{thm:proj} we consider $M'=M+tR$. It is not hard to estimate $t$ such that $M'$ has exactly one positive eigenvalue. Therefore, if we can test in polynomial time whether quadratic form $m'$ associated to $M'$ is $\mathcal{S}^n_+$-Lorentzian, then we can test whether $p \in P_{n,4}$ by \Cref{lem:red2}. However, it is known that testing nonnegativity of degree 4 polynomials is NP-Hard \cite[Part 1]{Blumcomplexity}. 
\end{proof}

\section{Completely \texorpdfstring{$\K$}{}-log-concave and \texorpdfstring{$\K$}{}-Lorentzian polynomials} \label{sec:equivalence}
We now investigate properties of the set $\SL^d(\K)$ of $\K$-Lorentzian polynomials of degree $d$. We consider its interior, and we also show equivalence between $\K$-Lorentzian and $\K$-CLC polynomials.

\subsection{Log-concavity over proper convex cones}
We show several equivalent characterizations of forms log-concave on a proper convex cone $\mathcal{K} \subset \mathbb{R}^{n}$. We employ the same reasoning as outlined in \cite{Cynthialog}. 
Additionally, we utilize the following version of Cauchy's interlacing Theorem.
\begin{lemma} [Cauchy's interlacing Theorem I] \cite[Corollary $4.3.9$]{Hornmatrix} \label{lemma:rank1}
Let $A$ be a symmetric matrix and vector $v \in \R^{n \times n}$. Then, the eigenvalues of $A-vv^{t}$ interlace the eigenvalues of $A$, i.e., if $\alpha_1, \dots , \alpha_n$ are eigenvalues of $A$ and $\beta_1, \dots , \beta_n$
are eigenvalues of $A - vv^T$, then
$$ \beta_1 \leq \alpha_1 \leq \dots \beta_{n} \leq \alpha_n$$
\end{lemma}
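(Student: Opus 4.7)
The plan is to derive the two halves of the interlacing inequality separately, using the Courant--Fischer min-max characterization of eigenvalues of symmetric matrices together with the fact that $vv^t$ is positive semidefinite of rank one.

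For the lower-bound inequalities $\beta_k \le \alpha_k$: since $vv^t$ has eigenvalues $\|v\|^2$ (once) and $0$ (with multiplicity $n-1$), it is positive semidefinite, so $A - vv^t \preceq A$ in the Loewner order. Monotonicity of eigenvalues under the Loewner order, an immediate consequence of the $\min$-$\max$ formula, yields $\beta_k \le \alpha_k$ for every $k = 1, \ldots, n$.

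For the upper-bound inequalities $\alpha_k \le \beta_{k+1}$ (for $k = 1, \ldots, n-1$), I would invoke Courant--Fischer in the form $\lambda_k(M) = \min_{\dim S = k}\, \max_{x \in S,\,\|x\|=1} x^t M x$. Pick a $(k+1)$-dimensional subspace $S$ attaining this minimum for $M = A - vv^t$, so that $\max_{x \in S,\,\|x\|=1} x^t(A - vv^t)x = \beta_{k+1}$. The intersection $S \cap v^\perp$ has dimension at least $k$ by dimension counting in $\R^n$, so we may select a $k$-dimensional subspace $S' \subseteq S \cap v^\perp$. For every unit $x \in S'$ the orthogonality $v^t x = 0$ forces $x^t A x = x^t(A - vv^t)x$, and because $x \in S$ this is bounded above by $\beta_{k+1}$. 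Hence $\max_{x \in S',\,\|x\|=1} x^t A x \le \beta_{k+1}$, and a second application of Courant--Fischer gives $\alpha_k \le \beta_{k+1}$.

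The only slightly technical step is the dimension count $\dim(S \cap v^\perp) \ge k$, which is entirely routine, so I anticipate no real obstacle. Alternative routes exist, for instance analyzing the secular equation $1 = v^t(\lambda I - A)^{-1} v$ whose roots describe the spectrum of $A - vv^t$, or applying the standard Cauchy interlacing theorem for principal submatrices to the bordered matrix $\bigl(\begin{smallmatrix} A & v \\ v^t & 0 \end{smallmatrix}\bigr)$ together with a Schur-complement identity. The Courant--Fischer route above is the most direct and self-contained.
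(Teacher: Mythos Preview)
Your proof is correct. The paper, however, does not supply a proof of this lemma at all: it is stated as a citation of \cite[Corollary~4.3.9]{Hornmatrix} and used as a black box. So there is no ``paper's own proof'' to compare against; you have filled in a standard argument that the authors chose to import from the literature. The Courant--Fischer route you take is precisely the textbook approach (and is essentially how Horn and Johnson derive rank-one interlacing from Weyl's inequalities), so your write-up is appropriate and self-contained. One tiny remark: when you pick a $(k+1)$-dimensional subspace $S$ ``attaining the minimum'' you are using that the minimum in Courant--Fischer is achieved, which follows from compactness of the Grassmannian; and the dimension count $\dim(S\cap v^\perp)\ge k$ tacitly assumes $v\neq 0$, the case $v=0$ being trivial. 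Neither point is a gap, just something worth a word if you want the argument fully airtight.
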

\noindent Therefore, we have the following result. 
\begin{lemma} \label{lemma:cauchy} \cite[Lemma 2.4]{Cynthialog}
Let $Q \in \R^{n \times n}$ be a symmetric matrix and let $P \in \R^{m \times n}$. If $Q$ has exactly one positive eigenvalue, then $PQP^{t}$ has at most one positive eigenvalue.
\end{lemma}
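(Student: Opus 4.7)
The plan is to reduce the statement to a single rank-one update and then invoke the interlacing result \Cref{lemma:rank1}. First I would use the spectral decomposition of $Q$: since $Q$ is symmetric with exactly one positive eigenvalue $\lambda_1>0$ and all other eigenvalues $\lambda_2,\dots,\lambda_n\leq 0$, I can write
\[
Q=\lambda_1 v_1 v_1^t - M, \qquad M=-\sum_{i\geq 2}\lambda_i v_i v_i^t,
\]
with $v_1,\dots,v_n$ an orthonormal eigenbasis and $M$ positive semidefinite.

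Conjugating by $P$ gives
\[
PQP^t = ww^t - N, \qquad w=\sqrt{\lambda_1}\,Pv_1,\qquad N=PMP^t,
\]
where $N$ is positive semidefinite and hence $-N$ is negative semidefinite. The point is that $PQP^t$ and $-N$ differ by the rank-one symmetric term $ww^t$.

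Now I would apply \Cref{lemma:rank1} with $A=PQP^t$ and the vector $w$: the eigenvalues $\beta_1\leq \cdots \leq \beta_m$ of $A-ww^t = -N$ interlace the eigenvalues $\alpha_1\leq \cdots \leq \alpha_m$ of $A$ via
\[
\beta_1\leq \alpha_1\leq \beta_2\leq \alpha_2\leq \cdots \leq \beta_m\leq \alpha_m.
\]
Since $-N$ is negative semidefinite, every $\beta_i\leq 0$. The interlacing inequalities $\alpha_i\leq \beta_{i+1}\leq 0$ for $i=1,\dots,m-1$ then force all but possibly the largest eigenvalue of $PQP^t$ to be nonpositive, which is exactly the desired conclusion.

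There is no real obstacle here; the only thing to note is the degenerate case $Pv_1=0$, in which $w=0$ and $PQP^t=-N$ is negative semidefinite, so it has no positive eigenvalues at all, still consistent with the claim. The whole argument is essentially bookkeeping around \Cref{lemma:rank1}, mirroring the proof given in \cite{Cynthialog}.
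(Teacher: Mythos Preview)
Your argument is correct, and it is exactly the route the paper points to: the paper does not write out a proof for \Cref{lemma:cauchy} but places it immediately after \Cref{lemma:rank1} with the words ``Therefore, we have the following result'' and a citation to \cite{Cynthialog}, indicating that it is meant to follow from the rank-one interlacing lemma. Your spectral decomposition $Q=\lambda_1 v_1v_1^t - M$ and subsequent application of \Cref{lemma:rank1} to $PQP^t = ww^t - N$ is precisely the standard way to make that implication explicit, and the degenerate case $w=0$ is handled correctly.
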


\begin{remark}
 Let $Q$ be a symmetric matrix. Then $Q$ has exactly one positive eigenvalue if and only if $Q$ has at most one positive eigenvalue and there exists $a \in \R^{n}$ such that $a^{t}Qa > 0$.
\end{remark}
The following lemma can be found in \cite{Huhenumeration} and \cite{Cynthialog}. However, for the sake of completeness, we provide a proof here to keep the paper self-contained.
\begin{lemma} \label{lemma:negsemdef}
Let $Q$ be a symmetric matrix and $a\in \R^n$ such that $a^tQa>0$. Then the following statements are equivalent:
\begin{enumerate} [(a)]
\item $Q$ has exactly one positive eigenvalue 
\item the matrix $(a^{t}Qa)\cdot Q-t(Qa)(Qa)^{t}$ is negative semidefinite for all $t \geq 1$.
\item $Q$ is negative semidefinite on the hyperplane $(Qa)^{t}$. 
\end{enumerate}
\end{lemma}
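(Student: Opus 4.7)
The plan is to establish the three conditions via the cycle (a)$\Rightarrow$(b)$\Rightarrow$(c)$\Rightarrow$(a). The substantive step is (a)$\Rightarrow$(b), which reduces to a reverse Cauchy-Schwarz inequality for $Q$; the other two implications are short manipulations using the hypothesis $a^tQa>0$ together with Cauchy interlacing.

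For (a)$\Rightarrow$(b), I would first reduce to the boundary case $t=1$. Setting $M_t:=(a^tQa)\,Q-t(Qa)(Qa)^t$, one has $M_t=M_1-(t-1)(Qa)(Qa)^t$, and since $(Qa)(Qa)^t\succeq 0$, it is enough to show $M_1\preceq 0$. Expanding $v^tM_1v=(a^tQa)(v^tQv)-(v^tQa)^2$, this is equivalent to proving
\begin{equation*}
(v^tQa)^2 \ \ge \ (a^tQa)(v^tQv)\qquad\text{for every } v\in\R^n.
\end{equation*}
If $v^tQv\le 0$ the right-hand side is nonpositive, so the inequality is trivial, and the only case that matters is the ``timelike'' case $v^tQv>0$. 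For this I would diagonalize $Q$ orthogonally as $\diag(\lambda_1,-\lambda_2,\ldots,-\lambda_k,0,\ldots,0)$ with $\lambda_1,\ldots,\lambda_k>0$ (using that $Q$ has exactly one positive eigenvalue), after which the inequality unfolds into a short completing-the-squares computation in the negative-definite block, controlled by ordinary Cauchy-Schwarz in $\R^{k-1}$.

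The implication (b)$\Rightarrow$(c) is immediate: taking $t=1$ and evaluating $v^tM_1v\le 0$ on any $v$ in the hyperplane $(Qa)^{\perp}$ gives $(a^tQa)(v^tQv)\le 0$, and since $a^tQa>0$ this forces $v^tQv\le 0$. For (c)$\Rightarrow$(a), I would pick a matrix $P\in\R^{n\times(n-1)}$ whose columns form a basis of $(Qa)^{\perp}$; hypothesis (c) then says $P^tQP\preceq 0$, i.e.\ $P^tQP$ has no positive eigenvalues. Cauchy's interlacing theorem (applied in the form that the eigenvalues of $P^tQP$ interlace those of $Q$, which is the Courant-Fischer variational argument behind \Cref{lemma:cauchy}) then forces $Q$ to have at most one positive eigenvalue; combined with the standing hypothesis $a^tQa>0$, it has exactly one.

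The main obstacle is the reverse Cauchy-Schwarz inequality in (a)$\Rightarrow$(b). Although it is a classical fact in Lorentzian geometry, the degenerate/singular subcases require some care in the writeup; an alternative to the direct computation is to first handle nonsingular $Q$ and then extend to singular $Q$ by a small perturbation $Q+\varepsilon\, aa^t$ that preserves the signature and lets us pass to the limit $\varepsilon\to 0^+$.
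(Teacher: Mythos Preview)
Your proposal is correct and follows the same cycle (a)$\Rightarrow$(b)$\Rightarrow$(c)$\Rightarrow$(a) as the paper, with (b)$\Rightarrow$(c) and (c)$\Rightarrow$(a) handled essentially identically. The only notable difference is in (a)$\Rightarrow$(b): you diagonalize $Q$ and prove the reverse Cauchy--Schwarz inequality $(v^tQa)^2\ge (a^tQa)(v^tQv)$ by a direct Lorentzian computation in coordinates, whereas the paper avoids coordinates entirely by restricting $Q$ to the plane $\mathrm{span}\{a,b\}$ via the $2\times n$ matrix $P$ with rows $a^t,b^t$, invoking \Cref{lemma:cauchy} to conclude that $PQP^t$ has at most one positive eigenvalue, and reading off $\det(PQP^t)=(a^tQa)(b^tQb)-(a^tQb)^2\le 0$. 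The paper's trick is shorter and sidesteps the case analysis and the singular-$Q$ perturbation you flagged; your diagonalization is more explicit and makes the Lorentzian-geometry content visible. Either way the argument goes through.
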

\begin{proof} $(a \Rightarrow b)$ Since the matrix $(Qa)(Qa)^{t}$ is positive semidefinite, it suffices to prove part (2) for $t=1$.  
Suppose $b \in \R^{n}$ is not a scalar multiple of $a$. Consider the $2 \times n$ matrix $P$ with rows $a^{t}$ and $b^{t}$. Then 
\begin{equation} \label{eq:negsemdef}
    PQP^{t}=\begin{bmatrix} a^{t}Qa & a^{t}Qb \\b^{t}Qa & b^{t}Qb & \end{bmatrix}
\end{equation}
By \Cref{lemma:cauchy}, the matrix $PQP^{t}$ has at most one positive eigenvalue. Then, $\det(PQP^{t})=a^{t}Qa \cdot b^{t}Qb-a^{t}Qb \cdot b^{t}Qa \leq 0$. Thus, $$b^{t}\left((a^{t}Qa)\cdot Q-(Qa)(Qa)^{t}\right)b \leq 0$$ for all $b \in \R^{n}$. 

($b \Rightarrow c$) Since $a^{t}Qa \cdot Q-(Qa)(Qa)^{t}$ is negative semidefinite and $a^{t}Qa >0$, we have $$b^{t}\left((a^{t}Qa)\cdot Q-(Qa)(Qa)^{t}\right)b \leq 0$$ for all $b \in \R^{n}$. Therefore $a^{t}Qa \cdot b^{t}Qb-a^{t}Qb \cdot b^{t}Qa \leq 0$. Thus, $Q$ is negative semidefinite on the hyperplane $\{b \in \R^{n}\, \mid \, b^{t}Qa=0\}=(Qa)^{\perp}$. 

($c \Rightarrow a$) Since $Q$ is negative semidefinite on the hyperplane $\{b \in \R^n \, \mid \, b^TQa = 0\}=(Qa)^{\perp}$ and $a^{t}Qa>0$, we see that $Q$ has exactly one positive eigenvalue.
\end{proof}

We now present several equivalent conditions for log-concavity of a form $f \in \R[x]_n^d$ on a proper convex cone $\mathcal{K}$, which are quite similar to the conditions described in \cite{Cynthialog} and \cite{Brandenlorentzian} for the nonnegative orthant. 
\begin{proposition} \label{prop:slc}
Let $f \in \R[x]_n^d$ be a form of degree $d \geq 2$ and $\K\subset \R^n$ be a proper convex cone. Suppose that $f(a)>0$ for all $a\in \inter \K$, and
let $Q_a$ denote the Hessian matrix of $f$ evaluated at a:  $Q_a=H_{f}(a)$. The following statements are equivalent:
\begin{enumerate} [(a)]
    \item $f$ is log-concave on $\K$.
    \item $Q_a$ has exactly one positive eigenvalue for all $ a \in \inter \K$. 
  \item The quadratic form $x^{t} Q_ax$ is negative semidefinite on the hyperplane $(Q_aa)^{\perp}$ for all $a\in \inter \K$.
  \item For all $a\in \inter \K$, the matrix $(a^{t}Q_a a)Q_a-(Q_a a)(Q_a a)^{t}$ is negative semidefinite.
\end{enumerate}
\end{proposition}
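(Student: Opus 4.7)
The plan is to use Euler's homogeneity identities to express $\log$-concavity of $f$ at $a \in \inter \K$ directly in terms of the Hessian matrix $Q_a$, and then invoke \Cref{lemma:negsemdef} pointwise at each $a \in \inter \K$ to get the equivalences $(b) \Leftrightarrow (c) \Leftrightarrow (d)$ and the final connection to $(a)$.

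First, since $f$ is homogeneous of degree $d$, Euler's identity gives $\nabla f(a)\cdot a = d\, f(a)$ and $Q_a\, a = (d-1)\nabla f(a)$. Consequently,
\begin{equation*}
\nabla f(a) = \tfrac{1}{d-1}Q_a a, \qquad f(a) = \tfrac{1}{d(d-1)}\, a^t Q_a a.
\end{equation*}
In particular, since $f(a) > 0$ on $\inter \K$ and $d \geq 2$, we have $a^t Q_a a > 0$, which is the hypothesis needed to apply \Cref{lemma:negsemdef} to $Q_a$ at every $a \in \inter \K$.

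Next I would compute the Hessian of $\log f$ at $a$, namely
\begin{equation*}
\nabla^2 \log f(a) \;=\; \tfrac{1}{f(a)}\, Q_a - \tfrac{1}{f(a)^2}\, \nabla f(a)\, \nabla f(a)^t,
\end{equation*}
which is negative semidefinite if and only if $f(a)\, Q_a - \nabla f(a)\nabla f(a)^t \preceq 0$. Substituting the Euler expressions and clearing the positive factor $d(d-1)^2$, condition $(a)$ at the point $a$ becomes
\begin{equation*}
(a^t Q_a a)\, Q_a \;-\; \tfrac{d}{d-1}\, (Q_a a)(Q_a a)^t \;\preceq\; 0.
\end{equation*}
Since $d \geq 2$ forces $t := d/(d-1) \geq 1$, this is precisely condition $(b)$ of \Cref{lemma:negsemdef} applied to $Q_a$ with the parameter $t = d/(d-1)$, which by that lemma is equivalent to $Q_a$ having exactly one positive eigenvalue. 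This gives $(a) \Leftrightarrow (b)$.

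Finally, the equivalences $(b) \Leftrightarrow (c) \Leftrightarrow (d)$ are immediate from \Cref{lemma:negsemdef}: condition $(d)$ of the proposition is exactly condition $(b)$ of the lemma at $t=1$, while condition $(c)$ of the proposition is condition $(c)$ of the lemma, both applied to $Q_a$ at the chosen point $a$ (using $a^t Q_a a > 0$). Since the four conditions are equivalent pointwise at each $a \in \inter \K$, and by definition $f$ is log-concave on $\K$ iff the Hessian of $\log f$ is negative semidefinite at every $a \in \inter \K$, the equivalence of $(a)$–$(d)$ as global statements on $\inter \K$ follows at once. There is no real obstacle here; the only subtle point is verifying that the particular scalar $d/(d-1)$ produced by Euler's identity lies in the range $[1,\infty)$ covered by \Cref{lemma:negsemdef}(b), which uses the hypothesis $d \geq 2$.
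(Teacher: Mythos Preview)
Your proof is correct and follows essentially the same route as the paper: both compute $H_{\log f}(a)$ via Euler's identities, rewrite log-concavity at $a$ as negative semidefiniteness of $(a^tQ_a a)Q_a - \tfrac{d}{d-1}(Q_a a)(Q_a a)^t$, and then reduce everything to \Cref{lemma:negsemdef}. One small wording point: condition~(b) of \Cref{lemma:negsemdef} is a ``for all $t\ge 1$'' statement, so identifying log-concavity with that condition ``at the parameter $t=d/(d-1)$'' is a mild abuse; but this is harmless, since negative semidefiniteness at any single $t_0>0$ already forces $Q_a$ to be negative semidefinite on $(Q_a a)^{\perp}$ (restrict the quadratic form), which is condition~(c) of the lemma and hence gives exactly one positive eigenvalue.
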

\begin{proof} Euler's Identity states that for a homogeneous polynomial $f$ of degree $d$, $\langle \nabla f(x) , x \rangle =\sum_{i=1}^{n}x_{i} \partial_{i}f(x)=d \cdot f(x)$. Using this on $f$ and $\partial_{i} f$ we easily get the following identities:
\begin{equation*} 
Q_a a=(d-1) \cdot \nabla f(a), \ a^{t} Q_a a =d(d-1) \cdot f(a).
\end{equation*}
Additionally, the Hessian of $\log(f)$ at $x=a$ equals
\begin{equation}\label{eq:equiv}
    H_{\log f}(a)=\left( \frac{f \cdot H_{f}-\nabla f \nabla f^{t}}{f^{2}}\right)|_{\x=\a}=d(d-1) \frac{a^{t}Q_a a \cdot Q_a-\frac{d}{d-1}(Q_a a)(Q_a a)^{t}}{(a^{t}Qa)^{2}}.
\end{equation}
($a \Rightarrow b$) Since $f$ is a non-zero form which is log-concave on all points of the interior of $\K$ we see that $f(a)>0$ for all $a\in \inter \K$, and the 
 Hessian of $\log(f)$ at $a$ is negative semidefinite. Since $a^tQ_a a>0$ we see from \eqref{eq:equiv} that $Q_a$ has at most $1$ positive eigenvalue. However, $Q_a$ cannot be negative definite, since $a^tQ_a a>0$. Thus $Q_a$ has exactly one positive eigenvalue.
 
\noindent ($b \Leftrightarrow c \Leftrightarrow d$) 
The proof follows by \Cref{lemma:negsemdef}. 
\end{proof}

We also have analogous conditions for strict log-concavity.
\begin{proposition} \label{prop:slc2}
Let $f \in \R[x]_n^d$ be a form of degree $d \geq 2$ and $\K\subset \R^n$ be a proper convex cone. Suppose that $f(a)>0$ for all $a\in \K$, and
let $Q_a$ denote the Hessian matrix of $f$ evaluated at a:  $Q_a=H_{f}(a)$. The following statements are equivalent:
\begin{enumerate} [(a)]
    \item $f$ is strictly log-concave on $\K$. 
     \item For all $a \in \K$, the quadratic form $x^t  Q_a x$ is negative definite on the hyperplane $(Q_aa)^{\perp}$ . 
    \item The matrix $Q_a$ is nonsingular and has exactly one positive eigenvalue for all $a \in \K$. 
\end{enumerate}
\end{proposition}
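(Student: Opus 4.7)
The plan is to follow the template of Proposition \ref{prop:slc}, with every ``semidefinite'' upgraded to ``definite,'' and to route the implications as $(a) \Rightarrow (c) \Rightarrow (b) \Rightarrow (a)$. As before, the central tool is the Euler-identity formula
\[
H_{\log f}(a) = d(d-1) \frac{(a^t Q_a a) Q_a - \tfrac{d}{d-1} (Q_a a)(Q_a a)^t}{(a^t Q_a a)^2},
\]
together with the identities $a^t Q_a a = d(d-1) f(a) > 0$ and $Q_a a = (d-1)\nabla f(a)$.

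For $(a) \Rightarrow (c)$, I would start from Proposition \ref{prop:slc}, which already forces $Q_a$ to have exactly one positive eigenvalue, and then adjoin a short non-singularity argument by contradiction: if $Q_a v = 0$ for some $v \neq 0$, then $(Q_a a)^t v = a^t Q_a v = 0$ so $v \in (Q_a a)^\perp$, and substituting $v$ into the numerator of the displayed formula gives $0$, violating negative definiteness of $H_{\log f}(a)$. For $(c) \Rightarrow (b)$, I would invoke Sylvester's law of inertia: since $Q_a$ has signature $(1, n-1)$ and $a$ is a positive direction for the bilinear form $B(x,y) = x^t Q_a y$, the $B$-orthogonal complement $(Q_a a)^\perp$ must then carry a negative definite restriction of $Q_a$. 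For $(b) \Rightarrow (a)$, I would use the direct-sum decomposition $\R^n = \mathrm{span}(a) \oplus (Q_a a)^\perp$ (valid because $a \notin (Q_a a)^\perp$), write $x = ca + y$ with $y^t Q_a a = 0$, and compute
\[
(a^t Q_a a)(x^t Q_a x) - \tfrac{d}{d-1} (x^t Q_a a)^2 = -\tfrac{c^2(a^t Q_a a)^2}{d-1} + (a^t Q_a a) \cdot y^t Q_a y,
\]
whose two summands are individually non-positive and simultaneously vanish only when $c = 0$ and $y = 0$, yielding strict negative definiteness of $H_{\log f}(a)$.

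The only step that goes beyond mechanically tightening Proposition \ref{prop:slc} is the non-singularity portion of $(a) \Rightarrow (c)$, since Proposition \ref{prop:slc} alone leaves open whether $Q_a$ could be singular with a one-dimensional positive eigenspace. That is where I expect to be most careful, but the null-vector test above dispatches it quickly; everything else is routine bookkeeping against the earlier proof.
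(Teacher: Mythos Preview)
Your argument is correct. One small wrinkle: when you invoke Proposition~\ref{prop:slc} in $(a)\Rightarrow(c)$, note that its statement is formulated for $a\in\inter\K$, whereas here you need the conclusion for every $a\in\K$. This is harmless because the proof of Proposition~\ref{prop:slc} is pointwise---it only uses $f(a)>0$ and negative semidefiniteness of $H_{\log f}(a)$---so it applies verbatim to boundary points under the present hypothesis $f(a)>0$ on all of $\K$. You may want to say this explicitly rather than cite the proposition as a black box.

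The paper routes the implications in the opposite direction, $(a)\Rightarrow(b)\Rightarrow(c)\Rightarrow(a)$. Its $(a)\Rightarrow(b)$ simply restricts the negative-definite numerator $(a^tQ_aa)Q_a-\tfrac{d}{d-1}(Q_aa)(Q_aa)^t$ to the hyperplane $(Q_aa)^\perp$, where the rank-one term vanishes; its $(b)\Rightarrow(c)$ is the same inertia count you use for $(c)\Rightarrow(b)$, read backwards; and its $(c)\Rightarrow(a)$ recycles the $2\times 2$ principal-minor argument of Lemma~\ref{lemma:negsemdef}, observing that when $Q_a$ is nonsingular the matrix $PQ_aP^t$ is itself nonsingular, so $\det(PQ_aP^t)<0$ strictly. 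Your $(b)\Rightarrow(a)$ replaces that Cauchy-interlacing step with the explicit decomposition $x=ca+y$ and a direct computation, which is arguably more transparent and avoids appealing back to Lemma~\ref{lemma:negsemdef}. Both approaches are short; yours is a bit more self-contained, while the paper's keeps closer parallelism with the non-strict case.
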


\begin{proof}
\noindent  ($a \Rightarrow b$) The Hessian of $\log( f(x))$ at $x = a$ is negative definite. Then by Equation \eqref{eq:equiv} the matrix $M_a:=(a^{t}Q_a a)\cdot Q_a-\frac{d}{d-1}(Q_a a)(Q_a a)^{t}$ is negative definite. Consider the restriction of the quadratic form $x^tM_ax$ to the hyperplane $(Q_aa)^{\perp}$. For $x$ in this hyperplane we have $x^tM_ax=(a^{t}Q_a a)\cdot x^tQ_ax$, and so $x^tQ_a x$ is negative definite on $(Q_aa)^\perp$.

\noindent ($b \Rightarrow c$) Since $x^tQ_a x$ is negative definite on the hyperplane $(Q_aa)^{\perp}$, we see that $Q$ has at least $n-1$ negative eigenvalues. But, since 
 $d(d-1) \cdot f(a)=a^tQ_a a >0$ we see that $Q_a$ must also have a positive eigenvalue. 
 
 \noindent ($c \Rightarrow a$) We need to establish that the matrix $a^{t}Q_a a \cdot Q_a-\frac{d}{d-1}(Q_a a)(Q_a a)^{t}$ is negative definite using the fact that $Q_a$ is nonsingular. The proof is exactly the same as the proof of $(a \Rightarrow b)$ in \Cref{lemma:negsemdef}.

\end{proof}

The following result shows that $f$ is log-concave at $a$ if and only if its directional derivative in the direction $a$ is log-concave at $a$, cf. \cite[Lemma 2.1]{Cynthialog3}. 
\begin{proposition} \label{prop:logderivative}
Let $f \in \R[x]_n^d$ be a homogeneous polynomial of degree $d \geq 3$ with $f(a)>0$ at some point $a \in \R^{n}$.  Then $f$ is log-concave at $a$ if and only if $D_{a}(f)$ is log-concave at $a$. 
\end{proposition}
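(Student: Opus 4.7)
The plan is to reduce the log-concavity of $f$ and of $g := D_a f$ at the single point $a$ to the same eigenvalue condition on $H_f(a)$, using Euler's identity throughout.

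First, Euler's identity applied to the degree-$d$ form $f$ yields $g(a) = \langle a,\nabla f(a)\rangle = d\cdot f(a) > 0$, so the positivity hypothesis in the definition of log-concavity at $a$ is automatic for $g$ as soon as it holds for $f$, and vice versa since $f(a) = g(a)/d$.

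Next, I would compute the Hessian of $g$ at $a$. Writing $g(x) = \sum_{i} a_i\, \partial_i f(x)$, the $(j,k)$-entry of $H_g(x)$ equals $\sum_{i} a_i\, \partial_i\partial_j\partial_k f(x) = D_a(\partial_j\partial_k f)(x)$. Since each second partial $\partial_j\partial_k f$ is homogeneous of degree $d-2$, applying Euler's identity entrywise at $x=a$ gives
\[ H_g(a) \;=\; (d-2)\,H_f(a). \]
Because $d\geq 3$, the scalar $d-2$ is strictly positive, so $H_g(a)$ and $H_f(a)$ are positive scalar multiples of each other, and in particular have the same inertia.

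Finally, the pointwise content of \Cref{prop:slc} closes the argument. For any form $h$ of degree $e\geq 2$ with $h(a)>0$, the explicit formula \eqref{eq:equiv} combined with \Cref{lemma:negsemdef} applied with $Q := H_h(a)$ and $t := \tfrac{e}{e-1}\geq 1$ shows that $h$ is log-concave at $a$ if and only if $H_h(a)$ has exactly one positive eigenvalue. Applying this criterion to $f$ (with $e=d$) and to $g$ (with $e=d-1\geq 2$, so that $t = (d-1)/(d-2)\geq 1$) reduces the biconditional in the statement to the comparison of the inertias of $H_f(a)$ and $H_g(a) = (d-2)\,H_f(a)$, which agree by the previous paragraph.

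The only nontrivial step is the Hessian identity $H_g(a)=(d-2)H_f(a)$, and that is a direct two-line calculation with Euler's identity applied to the degree-$(d-2)$ entries of the Hessian of $f$; I do not anticipate any real obstacle, and the hypothesis $d\geq 3$ enters exactly where the scalar $d-2$ must be positive.
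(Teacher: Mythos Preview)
Your proposal is correct and follows essentially the same approach as the paper: both arguments use Euler's identity to get $D_af(a)=d\,f(a)>0$, compute $H_{D_af}(a)=(d-2)H_f(a)$, note that the signatures therefore agree, and then reduce log-concavity at $a$ to the one-positive-eigenvalue criterion coming from \Cref{prop:slc}. You are in fact slightly more careful than the paper in spelling out why the pointwise equivalence behind \Cref{prop:slc} (via \eqref{eq:equiv} and \Cref{lemma:negsemdef}) applies at the single point $a$, and your observation that $t=e/(e-1)\geq 1$ is exactly the hypothesis needed for the $(a)\Rightarrow(b)$ direction of \Cref{lemma:negsemdef} is on point; the converse direction also goes through since on $(Qa)^\perp$ the $t$-term vanishes.
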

\begin{proof} By using Euler's identity for a homogeneous polynomial of degree $d$, we can express $d\cdot f(x)=\sum_{i=1}^{n}x_{i} \frac{\partial f}{\partial x_{i}}=D_{x}(f(x))$. Therefore, $f(a) >0$ if and only if $D_{a}f(a) >0$. Moreover, $H_{D_af}(a)=(d-2)H_f(a)$. Then at the point $x=a$, signature of $H_{f}(a)$ and $H_{D_{a}f}(a)$ are the same. Then the claim follows from \Cref{prop:slc}. \end{proof}

We also show that log-concavity at a point is preserved under invertible linear transformations. 
\begin{lemma} \label{lemma:coordinatechange}
If $f(x)\in \R[x]_n^d$ 
is log-concave at $a$, then $f(Ax)$ is log-concave at $A^{-1}a$ for any nonsingular $n \times n$ matrix $A$.
\end{lemma}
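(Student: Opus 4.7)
The plan is to reduce the statement to a direct chain-rule computation on the Hessian of $\log$. Set $g(x):=f(Ax)$ and $b:=A^{-1}a$, so that $Ab=a$. By the definition of log-concavity, I need to check two things at the point $b$: first, that $g(b)>0$; second, that the Hessian $H_{\log g}(b)$ is negative semidefinite.

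The first is immediate: $g(b)=f(Ab)=f(a)>0$ by hypothesis. For the second, I would apply the chain rule to the composition $\log g(x)=(\log f)(Ax)$. Writing $h:=\log f$, the standard chain rule for Hessians of a composition with a linear map yields
\begin{equation*}
H_{\log g}(x) \;=\; A^{t}\, H_{h}(Ax)\, A.
\end{equation*}
Evaluating at $x=b$ and using $Ab=a$ gives $H_{\log g}(b)=A^{t}H_{\log f}(a)A$. Since $f$ is log-concave at $a$, $H_{\log f}(a)$ is negative semidefinite, and conjugation by $A^{t}(\cdot)A$ preserves negative semidefiniteness: for any $v\in\R^n$,
\begin{equation*}
v^{t}A^{t}H_{\log f}(a)A v \;=\; (Av)^{t}H_{\log f}(a)(Av) \;\leq\; 0.
\end{equation*}
Therefore $H_{\log g}(b)\preceq 0$, so $g=f\circ A$ is log-concave at $b=A^{-1}a$.

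There is no serious obstacle here; the argument is a one-step chain rule plus the observation that $A^{t}MA$ inherits negative semidefiniteness from $M$. The nonsingularity of $A$ is used only to guarantee that $A^{-1}a$ is a well-defined point (and, if one wanted the stronger statement for strict log-concavity, to preserve the nondegeneracy via $\rank(A)=n$, since then $v\mapsto Av$ is injective and strict negative definiteness is preserved as well).
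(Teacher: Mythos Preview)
Your proof is correct and is essentially the paper's argument: both compute the relevant Hessian at $A^{-1}a$ as $A^{t}MA$ via the chain rule and observe that congruence by a nonsingular $A$ preserves the needed property. The only cosmetic difference is that you work directly with $H_{\log f}$ and negative semidefiniteness, whereas the paper applies the chain rule to $H_f$, invokes Sylvester's law of inertia to retain exactly one positive eigenvalue, and then cites \Cref{prop:slc}; your route is slightly more direct.
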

\begin{proof} The Hessian of $f(Ax)$ at $A^{-1}a$ is equal to  $A^tH_f(a)A$. Therefore the Hessian of $f(Ax)$ at $A^{-1}a$ has exactly one positive eigenvalue and log-concavity of $f(Ax)$ at $A^{-1}a$ follows by \Cref{prop:slc}.
\end{proof}

\subsection{Interior}\label{sec:int}

Br\"{a}nd\'{e}n and Leake \cite{Leake} proved that $\SL^d(\K)$ is a closed set, and it is the closure of its interior.  
We use their result to describe the interior of $\SL^d(\K)$. We define the set $\iSL^d(\K)$ as the set consisting forms in $\SL^d(\K)$ which additionally satisfy 
the following properties: for all $a_1,\dots, a_{d-2} \in \K$ the matrix $Q$ of $q=D_{a_1}\dots D_{a_{d-2}}f$ is nonsingular and $x^t Q y>0$ for all $x,y\in \K$, i.e. strict positivity in \Cref{def:pls} holds on all of $\K$, and not just in the interior of $\K$. Next we show that $\iSL^d(\K)$ is the interior of $\SL^d(\K)$.

\begin{theorem} \label{them:plsbdd} Let $\K\subset \R^n$ be a proper convex cone. Then $\iSL^{d}_{n}(\K)$ is the interior of $\SL^{d}_{n}(\K)$, and 
\begin{eqnarray*}
\SL^{d}_{n}(\K)=\overline{\iSL^{d}_{n}(\K)} 
\end{eqnarray*} 
\end{theorem}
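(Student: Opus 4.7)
The overall strategy is to verify the two inclusions $\iSL^d_n(\K)\subseteq \inter\,\SL^d_n(\K)$ and $\inter\,\SL^d_n(\K)\subseteq \iSL^d_n(\K)$; once equality is established, the closure statement $\SL^d_n(\K)=\overline{\iSL^d_n(\K)}$ follows immediately from the Br\"and\'en--Leake theorem (cited just above the theorem) that $\SL^d_n(\K)$ is the closure of its own interior. The forward inclusion reduces to showing $\iSL^d_n(\K)$ is open in $\R[x]_n^d$, since it is contained in $\SL^d_n(\K)$ by definition. For this, observe that the defining conditions (nonsingularity of $Q=Q(f;a_1,\dots,a_{d-2})$ and strict positivity of $y^tQx$) must hold uniformly for $a_1,\dots,a_{d-2},x,y$ ranging over the compact slice $\K\cap S^{n-1}$. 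The smallest absolute eigenvalue of $Q$ and the infimum of $y^tQx$ depend continuously on both the directions and the coefficients of $f$, so any $f\in\iSL^d_n(\K)$ admits a uniform positive lower bound on this compact parameter set, which survives small perturbations of $f$.

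For the reverse inclusion, suppose $f\in \SL^d_n(\K)\setminus \iSL^d_n(\K)$; the plan is to construct, for every $\epsilon>0$, a perturbation $f\pm\epsilon h$ that violates one of the conditions of \Cref{def:pls} at points of $\inter\K$, exhibiting $f\notin\inter\,\SL^d_n(\K)$. By hypothesis there exists $(a_1,\dots,a_{d-2})\in\K$ such that $Q$ is either singular or has $y^tQx=0$ for some $x,y\in\K\setminus\{0\}$ (the value $y^tQx<0$ is ruled out because $f\in\SL^d_n(\K)$ forces $y^tQx\geq 0$ on $\K$ by continuity from $\inter\K$). In the \emph{degeneracy} case $y^tQx=0$, I pick $h\in\R[x]_n^d$ with $D_yD_xD_{a_1}\cdots D_{a_{d-2}}h>0$; such $h$ exists because this evaluation is a nonzero linear functional on $\R[x]_n^d$ (e.g.\ $h=(c\cdot x)^d$ for generic $c$, with sign adjusted). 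Then $D_yD_xD_{a_1}\cdots D_{a_{d-2}}(f-\epsilon h)<0$, and by joint continuity together with density of $\inter\K$ in $\K$, the inequality persists at some $(a_1',\dots,a_{d-2}',x',y')\in(\inter\K)^{d}$, violating condition (2) of \Cref{def:pls} for $f-\epsilon h$. In the \emph{singular} case with null vector $v\ne 0$: if $\pm v\in\K$, then $v^tQv=0$ reduces to the degeneracy case with $x=y=v$; otherwise, a short preliminary observation (using $y^tQx\geq 0$ and $x^tQx\leq 0$ on $\K$ together with $\K$ being full-dimensional) shows $Q$ cannot be nontrivially negative semidefinite, so $Q$ has signature $(1,k,n-k-1)$ with $n-k-1\geq 1$. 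I then pick $h$ with $v^tHv>0$ where $H$ is the matrix of $D_{a_1}\cdots D_{a_{d-2}}h$; the zero eigenvalue of $Q+\epsilon H$ along $v$ becomes positive, producing at least two positive eigenvalues, and by continuity of eigenvalues this count persists at nearby $(a_1',\dots,a_{d-2}')\in\inter\K$, violating condition (1) of \Cref{def:pls} for $f+\epsilon h$.

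The principal obstacle is the singular subcase with null vector $v\notin\pm\K$: unlike the degeneracy case, there is no $(x,y)\in\K^2$ supplying a direct witness to perturb against, so one must engineer $h$ that produces a second positive eigenvalue at a boundary choice of $(a_1,\dots,a_{d-2})$ and then argue via eigenvalue continuity that this signature defect propagates to nearby tuples in $\inter\K$---precisely the step where conditions imposed on all of the closed cone $\K$ (rather than only on $\inter\K$) in the definition of $\iSL^d(\K)$ become essential.
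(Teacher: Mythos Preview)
Your proposal is correct and follows essentially the same approach as the paper's proof: openness of $\iSL^d(\K)$ via compactness of $\K\cap S^{n-1}$, then perturbing any $f\in\SL^d(\K)\setminus\iSL^d(\K)$ out of $\SL^d(\K)$ by treating the cases $y^tQx=0$ and $Q$ singular separately. The only notable differences are that you make explicit the continuity step passing from boundary directions $(a_1,\dots,a_{d-2})\in\K$ to nearby interior directions (which the paper leaves implicit), and in the singular case the paper chooses $q'$ strictly positive on the entire kernel of $Q$ rather than at a single null vector $v$, which bypasses your subcase split on whether $\pm v\in\K$ and the accompanying signature discussion.
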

\begin{proof} 
It suffices to show that (1) the set $\iSL^{d}_{n}(\K)$ is open, and (2) any form $f\in  \SL^{d}_{n}(\K) \setminus \iSL^{d}_{n}(\K)$ does not lie in the interior of $\SL^{d}_{n}(\K)$.

Openness of $\SL^{d}(\K)$ is clear for $d\leq 1$. For $d \geq 2$, let $f\in \iSL^d(\K)$ and let $B$ be the intersection of $\K$ with the unit sphere in $\R^n$: $B=\mathbb{S}^{n-1}\cap \K$. Let $\alpha$ be the infimum of $x^t Q y$ where $q=D_{a_1}\cdots D_{a_{d-2}}f$ with $a_i,x,y \in B$. Since we are taking the infimum of a continuous function on a compact set $B^d$ ($d-2$ choices for $a_i$ and one each for $x$ and $y$), we see that $\alpha>0$. Otherwise the minimum of $0$ is attained on some point of $B^d$, which contradicts $f \in \iSL^d(\K)$. Similarly let $\beta$ be the minimum of $| \det Q|$ over all choices of $a_1,\dots,a_{d-2} \in B$. Again we see that $\beta >0$. 
Therefore for any $g \in \R[x]_n^d$ there exists $\varepsilon >0$ such that $f+\varepsilon g \in \iSL^d(\K)$. Thus we see that $\iSL^d(\K)$ is open.

If a form $f$ lies in $\SL^d(\K) \setminus \iSL^d(\K)$, then for some choice of $a_1,\dots,a_{d-2}, x,y \in B$ we have $x^tQy=0$, or $Q$ is singular, where $q=D_{a_1}\cdots D_{a_{d-2}}f$. In the first case, take $g\in \R[x]_{n}^d$ such that $x^t Q'y<0$ where $q'=D_{a_1}\cdots D_{a_{d-2}}g$. Then for any $\varepsilon >0$ we have $f+\varepsilon g \notin \SL^d(\K)$, and thus $f\notin \inter \SL^d(\K)$. In the second case take $g\in \R[x]_{n}^d$ such that $q'=D_{a_1}\cdots D_{a_{d-2}}g$ is strictly positive on the kernel of $Q$. Then for sufficiently small $\varepsilon >0$ we have $f+\varepsilon g \notin \SL^d(\K)$, and thus $f\notin \inter \SL^d(\K)$.
\end{proof}

\subsection{Equivalence of \texorpdfstring{$\K$}{}-Lorentzian and \texorpdfstring{$\K$}{}-CLC forms} 
In this subsection, we will often utilize the equivalent defintion of $\K$-Lorentzian polynomials using positivity of $D_{a_{1}} \dots D_{a_{d}} f$ for all $a_1,\dots,a_d \in \inter \K$, as explained in \Cref{remark:defequiv}.

\begin{theorem} \label{them:plsclc}
Let $\K\subset \R^n$ be a proper cone. A form $f\in \R[x]_n^d$ is $\K$-Lorentzian if and only if $f$ is $\K$-CLC. Equivalently, $\SL^d(\K)=\mathrm{CLC}^d (\K)$.
\end{theorem}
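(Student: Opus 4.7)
The plan is to prove both inclusions. The forward inclusion $\CL^d(\K) \subseteq \SL^d(\K)$ follows quickly from \Cref{prop:slc} (the Hessian criterion for log-concavity). The reverse inclusion $\SL^d(\K) \subseteq \CL^d(\K)$ requires two ingredients: a lemma that taking a directional derivative $D_a$ with $a \in \K$ preserves the $\K$-Lorentzian property, and a reduction of the log-concavity of a general $\K$-Lorentzian form to the signature information supplied by \Cref{def:pls}. Throughout, I will use \Cref{them:plsbdd} (closedness of $\SL^d(\K)$) to handle directions lying on $\partial \K$.

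For the forward inclusion, fix $f \in \CL^d(\K)$ and $a_1, \dots, a_{d-2} \in \inter \K$, and set $q = D_{a_1} \cdots D_{a_{d-2}} f$ with matrix $Q$. Applying $\K$-CLC with $m = d-2$, the quadratic $q$ is log-concave on $\inter \K$, so \Cref{prop:slc} delivers $q(b) > 0$ and $H_q(b) = 2Q$ having exactly one positive eigenvalue for every $b \in \inter \K$; this is condition (1) of \Cref{def:pls}. For condition (2), fix $y \in \inter \K$ and observe that $\ell_y(x) := D_y q(x) = 2 y^t Q x$ coincides with $D_{a_1}\cdots D_{a_{d-2}} D_y f$, which is log-concave on $\inter \K$ by $\K$-CLC with $m = d-1$. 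If $\ell_y \equiv 0$, then $Qy = 0$ and hence $q(y) = 0$, contradicting $q(y) > 0$. So $\ell_y$ is a nonzero linear form that is log-concave on $\inter \K$, hence strictly positive there, which gives $y^t Q x > 0$ for all $x \in \inter \K$.

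For the reverse inclusion, I first claim $D_a f \in \SL^{d-1}(\K)$ whenever $f \in \SL^d(\K)$ and $a \in \K$. For $a \in \inter \K$ this is immediate from \Cref{def:pls}, since a quadratic $D_{a_1}\cdots D_{a_{d-3}}(D_a f)$ with $a_i \in \inter \K$ equals $D_{a, a_1, \dots, a_{d-3}} f$ using $d-2$ interior directions in total; for $a \in \partial \K$ I approximate by interior points and invoke closedness from \Cref{them:plsbdd}. Iterating $m$ times reduces the inclusion to showing that every $g \in \SL^k(\K)$ is log-concave on $\inter \K$. The cases $k \in \{0,1\}$ are routine: a nonnegative constant is log-concave by convention, and a nonzero linear form nonnegative on $\K$ is strictly positive on $\inter \K$ (otherwise its gradient would vanish at an interior minimizer, forcing $g \equiv 0$), hence log-concave there. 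For $k \geq 2$ and $b \in \inter \K$, I apply the Lorentzian property of $g$ with all directions equal to $b$: the quadratic form $q_b = D_b^{k-2} g$ with matrix $Q_b$ has exactly one positive eigenvalue and satisfies $b^t Q_b b > 0$. Standard polarization identities for the symmetric $k$-tensor of $g$ (consequences of iterated Euler's identity) give $q_b(b) = \tfrac{k!}{2} g(b)$ and $H_g(b) = \tfrac{2}{(k-2)!} Q_b$, so $g(b) > 0$ and $H_g(b)$ inherits ``exactly one positive eigenvalue'' from $Q_b$; \Cref{prop:slc} then yields log-concavity of $g$ at $b$. The main technical obstacle is the bookkeeping in this last step: the conceptual content — that $H_g(b)$ and $Q_b$ agree up to a positive scalar and hence share signatures — is straightforward, but the precise combinatorial constants in the polarization must be tracked carefully, and one must be careful not to conflate the matrix of a quadratic form with its Hessian.
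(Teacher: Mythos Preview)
Your proof is correct and follows essentially the same route as the paper: both directions hinge on \Cref{prop:slc}, and the key identity $H_g(b)=\tfrac{2}{(k-2)!}Q_b$ you use is exactly the iterated form of the single-step relation $H_{D_af}(a)=(d-2)H_f(a)$ that the paper packages as \Cref{prop:logderivative} inside an induction on $d$. The only organizational difference is that the paper runs an explicit induction invoking \Cref{prop:logderivative} at each step, whereas you unroll that induction into one polarization identity; the content is the same.
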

\begin{proof}  $\Leftarrow$  
For $f \in \mathrm{CLC}^{d}(\K)$, the form $f$ and all its repeated directional derivatives are log-concave on $\inter \K$. Thus we see that, the conditions for a polynomial to lie in $\SL^d(\K)$ are a subset of the conditions for a polynomial to lie in $\mathrm{CLC}^d(\K)$.

$\Rightarrow$
According to \Cref{def:pls}, nonnegative constants and linear forms nonnegative on $\K$ are $\K$-Lorentzian polynomials. We establish this result by induction on the degree $d$ of the forms. For the base case $d=2$, let $f\in \SL^2(\K)$. The positivity condition (2) in \Cref{def:pls} and \Cref{remark:defequiv} ensure log-concavity for directional derivatives of degree $1$ and $0$ for $f$. The signature condition (1) in \Cref{def:pls} implies that $f$ is log-concave on $\inter \K$ by \Cref{prop:slc}.

Now we assume that the result holds for forms in $\SL^{d-1}(\K)$. 
 By the induction hypothesis, $D_{a}f$ is $\K$-CLC for any $a \in \inter \K$, and since the set $\SL^{d-1}(\K)=\CL^{d-1}(\K)$ is closed, we see that $D_{a}f$ is $\K$-CLC for any $a \in \K$. Thus it suffices to show that $f$ is log-concave on $\inter \K$. Similarly to the base case, the positivity condition (2) in \Cref{def:pls} implies $f(a)>0$ for all $a \in \inter \K$.
By the induction hypothesis, $D_af$ is log-concave at any $a \in \inter \K$, and we also have $H_{D_af}(a)=(d-2)H_{f}(a)$. Therefore, by \Cref{prop:logderivative} we see that $f$ is log-concave at $a$. 
 Thus, $f\in \CL^d(\K)$.
\end{proof}

\begin{remark} \label{remark:int}
    By \Cref{them:plsclc} and \Cref{them:plsbdd} we have shown that $\CL^{d}(\K)$ is the closure of its interior, denoted as $\inter \CL^{d}(\K)$, consisting of strictly $\K$-CLC polynomials.
\end{remark}

\subsection{\texorpdfstring{$\K$}{}-CLC forms over proper convex cones}

We now derive some consequences for the structure of the set $\CL^d(\K)=\SL^d(\K)$. We use $H_f$ to denote the Hessian matrix of a form $f$.
\begin{lemma} \label{lemma:nonsingHes}
Let $f \in \R[x]^d_n$ be a nonzero polynomial and $\K$ be a proper convex cone. 
\begin{enumerate}[(a)]
\item If $f \in \CL^{d}(\K)$, then $H_{f}(a)$ has exactly one positive eigenvalue for all $a \in \inter \K$.
\item If $f \in \inter \CL^{d}(\K)$, then $H_{f}(a)$ is nonsingular and has exactly one positive eigenvalue for all $a \in \K$.
\end{enumerate}
\end{lemma}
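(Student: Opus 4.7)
The plan is to reduce both statements to the defining conditions of $\SL^d(\K)$ and $\iSL^d(\K)$ by making one clean observation: when the $d-2$ derivative directions are all chosen equal to a single point $a$, the resulting quadratic form $q_a(x)=D_a^{d-2}f(x)$ has matrix proportional to $H_f(a)$. Writing $f(x)=F(x,\dots,x)$ with $F$ the associated symmetric $d$-linear form, one has $D_a^{d-2}f(x)=\tfrac{d!}{2}F(a,\dots,a,x,x)$ while $(H_f(a))_{ij}=d(d-1)F(e_i,e_j,a,\dots,a)$, so by symmetry of $F$ the matrix $Q_a$ of $q_a$ satisfies
\begin{equation*}
Q_a \;=\; \frac{(d-2)!}{2}\,H_f(a).
\end{equation*}
The same identity can be checked by direct differentiation: since $q_a$ is quadratic, its Hessian is the constant matrix $2Q_a$, and differentiating $D_a^{d-2}f$ twice in the $x$ variables gives $(d-2)!H_f(a)$ on the nose (the case $d=2$ is the familiar $H_f = 2Q$, treated separately and trivially). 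In particular $Q_a$ and $H_f(a)$ share the same signature and the same rank.

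For part (a), use \Cref{them:plsclc} to identify $\CL^d(\K)$ with $\SL^d(\K)$, and apply \Cref{def:pls} to $f$ with $a_1=\dots=a_{d-2}=a$ for an arbitrary $a\in\inter\K$. This produces a matrix $Q_a$ with exactly one positive eigenvalue, and by the proportionality above the same is true of $H_f(a)$.

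For part (b), combine \Cref{them:plsbdd} and \Cref{remark:int} to identify $\inter\CL^d(\K)$ with $\iSL^d(\K)$. The essential point is that the definition of $\iSL^d(\K)$ allows the derivative directions to be chosen from all of $\K$, not merely $\inter\K$, and imposes nonsingularity of the resulting $Q$ together with the one-positive-eigenvalue condition. Performing the same substitution $a_1=\dots=a_{d-2}=a$ with $a\in\K$ and invoking the proportionality yields nonsingularity and the correct signature of $H_f(a)$ throughout $\K$. The only technical content is the proportionality identity; once that is in hand, no real obstacle remains, and the slight strengthening in (b) (from $\inter\K$ to all of $\K$) is extracted for free from the way $\iSL^d(\K)$ is defined.
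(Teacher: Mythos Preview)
Your proof is correct, but it takes a different route from the paper. The paper appeals directly to \Cref{prop:slc} for part (a) and to \Cref{prop:slc2} for part (b): since a nonzero $f\in\CL^d(\K)$ is by definition log-concave on $\inter\K$ (hence $f(a)>0$ there), \Cref{prop:slc} immediately gives the signature of $H_f(a)$; and since $f\in\inter\CL^d(\K)$ is strictly log-concave on all of $\K$ by \Cref{remark:int}, \Cref{prop:slc2} yields both nonsingularity and the signature. Your approach instead passes through the equivalence $\CL^d(\K)=\SL^d(\K)$ and the description $\inter\CL^d(\K)=\iSL^d(\K)$, together with the clean identity $Q_a=\tfrac{(d-2)!}{2}H_f(a)$ for $q_a=D_a^{d-2}f$, which lets you read the signature (and rank) of $H_f(a)$ straight off condition~(1) of \Cref{def:pls} and the nonsingularity built into $\iSL^d(\K)$. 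The paper's route is more self-contained (it does not invoke the deeper equivalence \Cref{them:plsclc}), while yours makes explicit the useful polarization identity relating $H_f(a)$ to the quadratic reduction of $f$.

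One small point in part (b): the written definition of $\iSL^d(\K)$ requires, for $a_1,\dots,a_{d-2}\in\K$, only that $Q$ be nonsingular and that $x^tQy>0$ on $\K$; the one-positive-eigenvalue condition is stated in \Cref{def:pls} only for directions in $\inter\K$. You are right that it extends to all of $\K$, but this needs a word: approximate boundary directions by interior ones, use continuity of eigenvalues to get at most one positive eigenvalue in the limit, and then invoke nonsingularity plus $a^tQa>0$ to conclude exactly one. With that one-line justification added, your argument is complete.
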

\begin{proof} $(a)$ follows from \Cref{prop:slc}. \\
($b$)  $f \in \inter \CL^{d}(\K)$ implies that $f$ is strictly log-concave on all points of $\K$. The rest follows from \Cref{prop:slc2}. 
\end{proof}
\begin{proposition} \label{prop:mslc}
Let $f \in \R[x]_n^d$ be a nonzero form of degree $d \geq 2$, $\K$ be a proper convex cone in $\R^{n}$, and $Q_a=H_{f}(a)$. 
\begin{enumerate} [(a)]
\item Let $f \in \inter \CL^{d}(\K)$. Then quadratic form $x^{t} Q_ax$ is negative definite on $(Q_ab)^{\perp}$ for every $a, b \in  \K$ such that $Q_ab \neq 0$. 
\item Let $f \in \CL^{d}(\K)$. Then quadratic form $x^{t} Q_ax$ is negative semidefinite on $(Q_ab)^{\perp}$ for every $a \in \inter \K$, and $b \in \K$ such that $Q_ab \neq 0$. 
\end{enumerate}
\end{proposition}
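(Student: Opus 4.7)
The plan is to reduce both parts to \Cref{lemma:negsemdef}, but applied with $b$ playing the role of the reference direction (in place of the $a$ that appears there). The hypothesis of that lemma is $b^t Q_a b > 0$, so the crux is to extract this positivity from the $\K$-CLC / $\K$-Lorentzian assumptions. The key bridge, which I would establish by iterating Euler's identity, is that the matrix of the quadratic form $q_a := D_a^{d-2} f$ equals $\tfrac{(d-2)!}{2}\, Q_a$, a positive scalar multiple of the Hessian. Via this identification, the positivity and signature information in \Cref{def:pls} for the matrix of $q_a$ transfers directly to $Q_a$.

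For (a): Since $f \in \inter \CL^d(\K) = \iSL^d(\K)$ by \Cref{them:plsbdd} combined with \Cref{them:plsclc}, taking all $d-2$ derivative directions equal to $a$ in the definition of $\iSL^d(\K)$ shows that $Q_a$ is nonsingular (this also follows from \Cref{lemma:nonsingHes}(b)), has exactly one positive eigenvalue, and satisfies $b^t Q_a b > 0$ for every nonzero $b \in \K$. \Cref{lemma:negsemdef}, applied with $b$ in place of $a$, then yields that $Q_a$ is negative semidefinite on $(Q_a b)^\perp$. To upgrade to negative definite, I would examine $M := (b^t Q_a b) Q_a - (Q_a b)(Q_a b)^t$; part (b) of \Cref{lemma:negsemdef} gives $M \preceq 0$. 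If $x \in (Q_a b)^\perp$ satisfies $x^t Q_a x = 0$, then a direct computation (using $(Q_a b)^t x = 0$) gives $x^t M x = 0$, so $Mx = 0$. Expanding $Mx = 0$ and again using $(Q_a b)^t x = 0$ forces $Q_a x = 0$, and nonsingularity of $Q_a$ then yields $x = 0$.

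For (b): For $b \in \inter \K$ the same recipe applies verbatim: $b^t Q_a b > 0$ follows from \Cref{def:pls} (with all derivative directions equal to $a \in \inter \K$), \Cref{lemma:nonsingHes}(a) supplies the signature of $Q_a$, and \Cref{lemma:negsemdef} produces negative semidefiniteness on $(Q_a b)^\perp$. The delicate case is $b \in \partial \K$ with $Q_a b \neq 0$, where $b^t Q_a b$ could vanish and \Cref{lemma:negsemdef} does not apply directly. I would handle this by approximation: pick $b_n \in \inter \K$ with $b_n \to b$; since $Q_a b \neq 0$, for large $n$ we also have $Q_a b_n \neq 0$, and the interior case gives $Q_a$ negative semidefinite on $(Q_a b_n)^\perp$. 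For any $x \in (Q_a b)^\perp$, take $x_n$ to be the orthogonal projection of $x$ onto $(Q_a b_n)^\perp$; since $\langle x, Q_a b_n \rangle \to \langle x, Q_a b \rangle = 0$ and $\|Q_a b_n\|$ stays bounded away from zero, we get $x_n \to x$, and passing to the limit in $x_n^t Q_a x_n \leq 0$ yields $x^t Q_a x \leq 0$.

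The main obstacle is the boundary case in (b), namely that $b^t Q_a b$ can degenerate to zero on $\partial \K$; the approximation above is the cleanest workaround, and it goes through because the normals $Q_a b_n$ stay bounded away from zero, giving continuity of the orthogonal projections onto the hyperplanes $(Q_a b_n)^\perp$. The upgrade in (a) is then a short bookkeeping step once \Cref{lemma:negsemdef}(b) is in hand.
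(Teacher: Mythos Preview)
Your proof is correct and, for part (a), essentially matches the paper's: both establish $b^{t}Q_a b>0$ together with the signature/nonsingularity of $Q_a$, and then appeal to \Cref{lemma:negsemdef}. The paper obtains $b^{t}Q_a b=D_bD_bf(a)>0$ directly from \Cref{def:clc} (strict $\K$-CLC), whereas you route through the identification $\inter\CL^d(\K)=\iSL^d(\K)$ and your Euler-identity bridge $H_{q_a}=(d-2)!\,Q_a$; these are just two phrasings of the same positivity. You are also more explicit than the paper about the upgrade from ``negative semidefinite'' to ``negative definite'' via the $M$-argument, which the paper leaves implicit in its citation of \Cref{lemma:negsemdef} together with nonsingularity.

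For part (b) there is a small but genuine difference in the limiting step. The paper perturbs the \emph{polynomial}: it takes $f_\varepsilon\in\inter\CL^d(\K)$ with $f_\varepsilon\to f$ (via \Cref{remark:int}), applies part (a) to each $f_\varepsilon$, and passes to the limit. You instead keep $f$ fixed and perturb the \emph{direction} $b$, first handling $b\in\inter\K$ directly from \Cref{def:pls} and then pushing boundary $b$ inward. Your route avoids invoking the ``closure of interior'' description of $\CL^d(\K)$, at the cost of spelling out the hyperplane-projection limit; the paper's route is more uniform in $b$ but leans on \Cref{remark:int}. Both are valid.
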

\begin{proof} (a) Given that $f \in \inter \CL^{d}(\K)$ and $a \in \K$, we know that $Q_a$ is nonsingular has exactly one positive eigenvalue by \Cref{lemma:nonsingHes}. Additionally, since $b \in \K$, so it follows from the \Cref{def:clc} that $b^{t}Q_ab=D_{b}D_{b}f(a) >0$. Then the result follows from \Cref{lemma:negsemdef}. \\
(b) This follows from (a) by taking limits and using \Cref{remark:int}. 
\end{proof}

\begin{lemma} \label{lemma:negdeter}
   Let $f \in \R[x]_n^d$ be a nonzero polynomial, $\K$ be a proper convex cone in $\R^{n}$, and $Q_a:=H_{f}(a)$ for $a \in \K$.
    \begin{enumerate}[(a)]
   \item  If $f \in \inter \CL^{d}(\K)$, the restriction of $Q_a$ on a plane spanned by $b,c \in  \K$ is a positive matrix with negative determinant.
   \item If $f \in \CL^{d}(\K)$, the restriction of $Q_a$ on a plane spanned by $b,c \in \K$ is a nonnegative matrix with nonpositive determinant.
   \end{enumerate}
\end{lemma}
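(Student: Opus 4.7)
The $2\times 2$ restriction of $Q_a$ to the plane spanned by $b,c$ is
\[
M \;=\; \begin{pmatrix} b^t Q_a b & b^t Q_a c \\ c^t Q_a b & c^t Q_a c \end{pmatrix},
\]
and each entry $u^t Q_a v$ equals $D_u D_v f(a)$. My plan is to settle (a) directly from the earlier structural results on $\inter\CL^d(\K)$ and deduce (b) by approximation.

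For (a), positivity of all three distinct entries of $M$ is a direct unpacking of the definition of $\iSL^d(\K) = \inter\CL^d(\K)$: if one takes all $d-2$ directions of differentiation equal to $a$, then the matrix $Q'$ of the quadratic form $D_a^{d-2}f$ is a positive scalar multiple of $Q_a$, and the condition in \Cref{remark:defequiv} that $x^t Q' y>0$ for all $x,y\in\K$ then yields $u^t Q_a v>0$ for every $u,v\in\K$. For the strict negativity of $\det M$ I would establish the algebraic identity
\[
(b^t Q_a b)(c^t Q_a c)-(b^t Q_a c)^2 \;=\; (b^t Q_a b)\,(c')^t Q_a c',
\]
where $c' := c-\lambda b$ with $\lambda = (c^t Q_a b)/(b^t Q_a b)$, chosen precisely so that $c'\in(Q_a b)^\perp$. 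The decomposition is valid because $Q_a$ is nonsingular by \Cref{lemma:nonsingHes}(b), so $Q_a b\neq 0$ and $b^t Q_a b > 0$. By \Cref{prop:mslc}(a), applied with the present $a,b\in\K$, the form $Q_a$ is negative definite on $(Q_a b)^\perp$. Since $b,c$ span a plane, $c'\neq 0$, so $(c')^t Q_a c'<0$; combined with $b^t Q_a b>0$ this gives $\det M<0$.

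For (b) I would use approximation. By \Cref{them:plsbdd} together with \Cref{remark:int}, $\CL^d(\K) = \overline{\inter\CL^d(\K)}$, so I can choose $f_n\in\inter\CL^d(\K)$ with $f_n\to f$ in coefficients. Part (a) applied to each $f_n$ at the same vectors $a,b,c$ yields positive entries and negative determinant for the corresponding matrix $M_n$; since both conditions are continuous in the entries of the Hessian, passing to the limit yields nonnegative entries and nonpositive determinant for $M$. I do not anticipate any real obstacle: the only calculation of substance is the two-line identity for $\det M$, which follows immediately from expanding via $c=\lambda b + c'$ and $b^t Q_a c' = 0$; everything else is a direct invocation of tools developed earlier in this section.
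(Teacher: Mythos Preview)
Your argument is correct and follows essentially the same plan as the paper: positivity of the entries of $M$ comes from the characterization of $\inter\CL^d(\K)=\iSL^d(\K)$, the sign of $\det M$ comes from the Lorentzian signature of $Q_a$, and part~(b) follows from~(a) by taking limits via \Cref{remark:int}. The only difference is in how the \emph{strict} negativity of $\det M$ is extracted in~(a): the paper appeals to nonsingularity of $Q_a$ (\Cref{lemma:nonsingHes}) together with the Cauchy--interlacing restriction argument around~\eqref{eq:negsemdef}, whereas you invoke \Cref{prop:mslc}(a) and the explicit decomposition $c=\lambda b+c'$, which makes the strict inequality more transparent. One small citation slip: the clause ``$x^tQ'y>0$ for all $x,y\in\K$'' is part of the definition of $\iSL^d(\K)$ in \Cref{sec:int}, not of \Cref{remark:defequiv}.
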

\begin{proof}
$(a)$  By \Cref{lemma:nonsingHes}, $Q:=H_f(a)$ is nonsingular and has exactly one positive eigenvalue. Then the rest follows from \Cref{remark:defequiv} and the restriction of the Hessian to the plane spanned by $b, c \in \K$, outlined in \eqref{eq:negsemdef}. 

$(b)$ Since $\CL^{d}(\K)$ is the closure of its interior (See \Cref{remark:int}), the result follows from part $(a)$ by taking limits.
\end{proof}

\subsection{The sum of two Lorentzian polynomials}

The sum of two log-concave polynomials may not be log-concave.
The subsequent result provides a sufficient condition for the sum of two $\K$-Lorentzian polynomials to also be $\K$-Lorentzian. This condition is analogous to the one provided in \cite[Lemma 3.3]{Cynthialog3} for the sum of two log-concave polynomials with nonnegative coefficients to be log-concave on the nonnegative orthant.

\begin{theorem} \label{them:sum}
    Let $f,g \in \SL^{d}(\K)$.
    If there exist vectors $b,c \in \K$ such that $D_bf=D_cg \not \equiv 0$, then $f+g$ is $\K$-Lorentzian.
\end{theorem}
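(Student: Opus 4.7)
The plan is to reduce the claim to quadratic forms and then invoke \Cref{prop:mslc}(b). The signature condition ``at most one positive eigenvalue'' for $F+G$ is the delicate part; strict positivity of the associated bilinear form on $\inter\K$ is automatic by summing the same condition for $f$ and for $g$.

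\textbf{Base case $d=2$.} Let $F, G$ be the matrices of $f, g$. The hypothesis becomes $Fb = Gc =: v \ne 0$ with $b,c \in \K$. Since $\SL^2(\K) = \CL^2(\K)$ by \Cref{them:plsclc}, \Cref{prop:mslc}(b) applied to $f$ and to $g$ shows that both $F$ and $G$ are negative semidefinite on the common hyperplane $v^\perp = (Fb)^\perp = (Gc)^\perp$. Therefore $F+G$ is also negative semidefinite on $v^\perp$, hence has at most one positive eigenvalue. On the other hand, for any $a\in\inter\K$, $a^t(F+G)a = a^tFa + a^tGa > 0$, so $F+G$ has exactly one positive eigenvalue. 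The positivity condition (2) of \Cref{def:pls} for $F+G$ follows by summing it for $F$ and $G$.

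\textbf{Reduction for $d\ge 3$.} Fix arbitrary $a_1,\dots,a_{d-2}\in\inter\K$ and set $q_f := D_{a_1}\cdots D_{a_{d-2}}f$ and $q_g := D_{a_1}\cdots D_{a_{d-2}}g$. By the definition of $\SL^d(\K)$ these are $\K$-Lorentzian quadratic forms. Applying $D_{a_1}\cdots D_{a_{d-2}}$ to the identity $D_bf = D_cg =: h$ yields
\[
D_bq_f \;=\; D_cq_g \;=\; D_{a_1}\cdots D_{a_{d-2}}h.
\]
Provided this common linear form is nonzero, the base case applied to $(q_f, q_g, b, c)$ gives $q_f + q_g \in \SL^2(\K)$. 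Since $a_1,\dots,a_{d-2}\in\inter\K$ were arbitrary, this yields $f+g \in \SL^d(\K)$.

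\textbf{Nonvanishing of the iterated derivative.} The main obstacle is establishing $D_{a_1}\cdots D_{a_{d-2}}h \not\equiv 0$. The polynomial $h = D_bf$ is nonzero by hypothesis, and lies in $\SL^{d-1}(\K)$: for $b\in\inter\K$ this is immediate from the definition of $\SL^d(\K)$, and for $b\in\partial\K$ it follows by approximation using closedness of $\SL^{d-1}(\K)$ (\Cref{them:plsbdd}). I then claim that for any nonzero $p\in\SL^e(\K)$ with $e\ge 1$ and any $a\in\inter\K$, we have $D_ap\not\equiv 0$. For $e=1$, $p = \langle\ell,x\rangle$ with $\ell\in\K^*\setminus\{0\}$, so $D_ap = \langle\ell,a\rangle > 0$. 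For $e\ge 2$, by \Cref{them:plsclc} $p$ is $\K$-CLC and hence strictly positive on $\inter\K$; if $D_ap\equiv 0$ then $D_a^kp\equiv 0$ for all $k\ge 1$, so Taylor expansion along the line through $0$ in direction $a$ gives $p(ta) \equiv p(0) = 0$, contradicting $p(ta) = t^e p(a)$ with $p(a) > 0$. Iterating this claim along $a_1,\dots,a_{d-2}$, with each intermediate polynomial remaining nonzero and $\K$-Lorentzian of the appropriate (decreasing) degree, produces a nonzero linear form and completes the reduction.
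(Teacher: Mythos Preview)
Your proof is correct and runs parallel to the paper's, with one structural difference: the paper inducts on $d$ from the trivial base case $d=1$, peeling off one directional derivative at a time, whereas you reduce directly to $d=2$ by taking all $d-2$ derivatives at once. The nonvanishing arguments are the same in spirit (a nonzero $\K$-Lorentzian polynomial is strictly positive on $\inter\K$, so $D_a p\not\equiv 0$ by Euler's identity or your Taylor argument).

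Your explicit treatment of the quadratic base case via \Cref{prop:mslc}(b) is in fact the crux, and you are more careful here than the paper. The paper's inductive step at $d=2$ only shows $D_a(f+g)\in\SL^{1}(\K)$ for all $a\in\inter\K$, which yields the positivity condition (2) of \Cref{def:pls} but says nothing about the signature condition (1); the implication ``$D_a h\in\SL^{d-1}(\K)$ for all $a\in\inter\K$ $\Rightarrow$ $h\in\SL^{d}(\K)$'' is not the definition when $d=2$ and indeed fails in general (e.g.\ $h=x_1^2+x_2^2$ on $\R^2_{\ge 0}$). Your common-hyperplane argument --- $Fb=Gc=v\ne 0$ forces both $F$ and $G$, hence $F+G$, to be negative semidefinite on $v^\perp$ --- is precisely what supplies the signature control, and is the idea behind the analogous \cite[Lemma~3.3]{Cynthialog3} that the paper cites.
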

\begin{proof}
 We prove this result by induction on the degree $d$. If $d = 1$, then $(f + g)(a)=f(a)+g(a)> 0$ for any $a \in \inter \K$. Thus, $f+g$ is $\K$-Lorentzian. 

Now, suppose $d \geq 2$ and fix $a \in \inter \K$. Let $f_1=D_a f$ and $g_1=D_a g$. Note that $f_1$ and $g_1$ are $\K$-Lorentzian polynomials, and additionally $D_b f_1=D_bD_a f=D_a D_b f=D_a D_c g= D_c D_a g= D_c g_1$. If we can show that $D_bf_1=D_cg_1 \not \equiv 0$ for all $a \in \inter \K$, then by the induction assumption we have $f_1+g_1=D_a(f+g)$ is $\K$-Lorentzian for all $a\in \inter \K$ and therefore $f+g$ is $\K$-Lorentzian.

Since the set $\SL^{d-1}(\K)$ is closed it follows that $D_bf$ is a nonzero polynomial in $\SL^{d-1}(\K)$. In particular, $D_b f$ is strictly positive on $\inter K$, and therefore $D_a D_b f(a)=(d-1) (D_b f)(a)>0$. Thus we see that $D_b f_1=D_b D_a f=D_a D_b f$ is not identically zero.
 \end{proof}

\section{\texorpdfstring{$\K$}{}-Lorentzian polynomials over self-dual cones} \label{sec:selfdual}
In this section, we outline an interesting connection $\K$-Lorentzian polynomials on a self-dual cone $\K$ and the generalized Perron-Frobenius theorem. We refer the reader to \cite{selfdual} for various classes of self-dual cones other than the nonnegative orthant, the second order cone, and the cone of positive semidefnite matrices.

\subsection{$\K$-irreducibility}
We now show an alternative characterization of the set of Lorentzian polynomials $\SL^{d}(\K)$ for a self-dual cone $\K$. We note that for a quadratic form $q$, the Hessian matrix $H_q$ is equal to twice the matrix $Q$ of $q$. Therefore our results can be equivalently stated in terms of the matrix $H_q$.

\begin{theorem} \label{them:clcbdd}
Let $f \in \R[x]_n^d$ be a form of degree $d \geq 2$ and $\K$ be a self-dual cone. Then $f\in \SL^d(\K)$ if and only if for all $a_1,\dots,a_{d-2} \in \inter \K$ the matrix $Q$ of the quadratic form  $q=D_{a_1}\dots D_{a_{d-2}}f$ satisfies the following two conditions:
    \begin{enumerate}[(a)]
        \item $Q$ either nonsingular and $\K$-irreducible, or singular and $\K$-nonnegative.
        \item $Q$ has exactly one positive eigenvalue.
        \end{enumerate}
        \end{theorem}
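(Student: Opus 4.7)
The proof plan is essentially a reduction of the degree $d$ case to the degree $2$ case via the definition, followed by invoking the already-established characterization for self-dual cones in \Cref{cor:qclcirre}.

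First I would unpack \Cref{def:pls}: for $d\geq 2$, a form $f\in \R[x]_n^d$ lies in $\SL^d(\K)$ if and only if, for every choice of $a_1,\dots,a_{d-2}\in \inter\K$, the quadratic form $q=D_{a_1}\cdots D_{a_{d-2}}f$ satisfies
\begin{enumerate}[(i)]
\item the matrix $Q$ has exactly one positive eigenvalue, and
\item $y^t Q x >0$ for all $x,y\in \inter \K$.
\end{enumerate}
But conditions (i) and (ii) are exactly the defining conditions for $q$ to be $\K$-Lorentzian as a quadratic form, i.e., $q\in \SL^2(\K)$. Hence the question of whether $f\in \SL^d(\K)$ reduces pointwise (in $a_1,\dots,a_{d-2}$) to the question of whether $q\in \SL^2(\K)$.

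Next I would apply \Cref{cor:qclcirre}, which, for a self-dual cone $\K$, characterizes membership in $\SL^2(\K)$ as: the matrix $Q$ of $q$ has exactly one positive eigenvalue and $Q$ is either nonsingular and $\K$-irreducible, or singular and $\K$-nonnegative. Substituting this equivalence into the characterization of $\SL^d(\K)$ from the previous paragraph yields exactly conditions (a) and (b) of the theorem, proving both directions simultaneously.

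There is no genuine obstacle here: all the work was done upstream in \Cref{lem:red1}, which allowed us to replace the bilinear positivity condition (2) of \Cref{def:pls} with the $\K$-nonnegativity statement $Q(\K)\subseteq \K^*$, then in \Cref{lemma:clcirre}, which promoted nonsingular $\K$-nonnegative matrices satisfying the signature condition to $\K$-irreducible ones via \Cref{them:irreducible}, and finally in the self-duality assumption $\K^*=\K$ which makes ``$Q(\K)\subseteq \K^*$'' coincide with ``$Q$ is $\K$-nonnegative.'' The only thing to verify in the proof itself is the (essentially tautological) observation that the degree-$d$ definition factors through the degree-$2$ definition applied to each directional derivative $D_{a_1}\cdots D_{a_{d-2}}f$; once that is stated, the theorem follows by direct citation of \Cref{cor:qclcirre} applied to each such quadratic form.
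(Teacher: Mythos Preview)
Your proposal is correct and is essentially the same as the paper's own proof, which simply cites \Cref{def:pls} and \Cref{cor:qclcirre}; you have merely spelled out in detail why those two references suffice.
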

 \begin{proof} This follows from \Cref{def:pls} and \Cref{cor:qclcirre}. \end{proof}

Therefore, we have the following characterization of Lorentzian polynomials or, equivalently, CLC polynomials on the nonnegative orthant.

\begin{corollary}
  Let $f \in \R[x]_n^d$ be a form of degree $d \geq 2$. Then $f$ is Lorentzian or CLC polynomial on $\R^{n}_{\geq 0}$ if and only if for all $a_1,\dots,a_{d-2} \in  \R^n_{>0}$, the matirx $Q$ of the quadratic form $q=D_{a_1}\dots D_{a_{d-2}}f$ satisfies the following two conditions: 
    \begin{enumerate} 
        \item $Q$ is either nonsingular and irreducible, or singular and nonnegative.
        \item $Q$ has exactly one positive eigenvalue.
        \end{enumerate}
\end{corollary}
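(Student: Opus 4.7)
The plan is to obtain the corollary as a direct application of \Cref{them:clcbdd} to the specific self-dual cone $\K = \R^n_{\geq 0}$. So my proof will essentially consist of three short verifications: that $\R^n_{\geq 0}$ is self-dual, that $\R^n_{\geq 0}$-nonnegativity of a matrix coincides with the classical notion of (entrywise) nonnegativity, and that $\R^n_{\geq 0}$-irreducibility coincides with the classical notion of irreducibility for nonnegative matrices.

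First I would recall that $\R^n_{\geq 0}$ is self-dual with respect to the standard inner product, since $\langle x, y \rangle \geq 0$ for every $y \in \R^n_{\geq 0}$ forces every coordinate of $x$ to be nonnegative (take $y$ to run over the standard basis). So \Cref{them:clcbdd} applies verbatim with $\K = \R^n_{\geq 0}$, giving exactly the two conditions (a) and (b) of the corollary, provided we reconcile the notions of $\K$-nonnegativity and $\K$-irreducibility with the classical ones.

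Next I would translate $\R^n_{\geq 0}$-nonnegativity: a matrix $Q$ satisfies $Q(\R^n_{\geq 0}) \subseteq \R^n_{\geq 0}$ if and only if every column $Qe_i$ lies in $\R^n_{\geq 0}$, i.e. all entries of $Q$ are nonnegative. For $\R^n_{\geq 0}$-irreducibility I would use the fact that the proper faces of $\R^n_{\geq 0}$ are precisely the coordinate faces $F_S = \{x \in \R^n_{\geq 0} : x_i = 0 \text{ for } i \notin S\}$ for nonempty proper subsets $S \subsetneq \{1,\dots,n\}$. A nonnegative matrix $Q$ leaves $F_S$ invariant iff $Q_{ij} = 0$ whenever $i \notin S$ and $j \in S$, which is exactly the classical condition that, after a permutation of indices, $Q$ decomposes into a block-triangular form, i.e.\ $Q$ is reducible in the usual combinatorial sense. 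Therefore $\R^n_{\geq 0}$-irreducibility is exactly classical irreducibility.

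With these identifications in hand, \Cref{them:clcbdd} reads precisely as the statement of the corollary, which completes the argument. I do not expect any real obstacle here: the content is entirely in \Cref{them:clcbdd}, and what remains is a routine dictionary between the cone-theoretic language used throughout the paper and the classical Perron--Frobenius vocabulary for the nonnegative orthant.
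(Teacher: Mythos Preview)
Your proposal is correct and follows exactly the paper's approach: the corollary is stated immediately after \Cref{them:clcbdd} as its specialization to $\K=\R^n_{\geq 0}$, with no separate proof given. Your added verifications that $\R^n_{\geq 0}$ is self-dual and that $\K$-nonnegativity/$\K$-irreducibility coincide with the classical notions simply make explicit what the paper leaves implicit.
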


We now provide an alternative characterization of the interior $\iSL^{d}(\K)$ of the set of $\K$-Lorentzian polynomials on a self-dual cone $\K$ using $\K$-positivity.

\begin{theorem} \label{them:k-irre}
     Let $f \in \R[x]_n^d$ be a homogeneous polynomial of degree $d \geq 2$ and $\K$ be a self-dual cone. Then $f \in  \iSL^{d}(\K)$ if and only if for any $a_1,\dots,a_{d-2} \in \K$, the matrix $Q$ of the quadratic form $q=D_{a_1}\dots D_{a_{d-2}}f$ satisfies the following two conditions:
    \begin{enumerate}
        \item $Q$ is $\K$-positive and nonsingular.
        \item $Q$ has exactly one positive eigenvalue.
        \end{enumerate}
\end{theorem}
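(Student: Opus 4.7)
The plan is to reduce the theorem to a single structural fact about self-dual cones: since $\K=\K^*$, the strict form of biduality gives that $v\in\inter\K$ if and only if $\langle v,w\rangle>0$ for every nonzero $w\in\K$. Because $Q$ is symmetric, $x^tQy=\langle x,Qy\rangle$, so the pairwise positivity condition $x^tQy>0$ for all nonzero $x,y\in\K$ that appears in the definition of $\iSL^d(\K)$ is equivalent, via the above characterization, to saying that $Qy\in\inter\K$ for every nonzero $y\in\K$, i.e.\ that $Q$ is $\K$-positive. This equivalence is the bridge between the definition of $\iSL^d(\K)$ and condition~(1) of the theorem, and once it is in place the rest of the argument is bookkeeping.

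For the forward direction I would fix $f\in\iSL^d(\K)$ and $a_1,\dots,a_{d-2}\in\K$. The definition of $\iSL^d(\K)$ directly gives that $Q$ is nonsingular and that $x^tQy>0$ for all nonzero $x,y\in\K$; applying the self-duality equivalence above then yields $\K$-positivity of $Q$, which together with nonsingularity is condition~(1). For condition~(2), the ``exactly one positive eigenvalue'' statement holds for $a_i\in\inter\K$ because $f\in\SL^d(\K)$. To extend it to all $a_i\in\K$, I would argue by continuity: the eigenvalues of $Q$ depend continuously on $(a_1,\dots,a_{d-2})$, and the hypothesized nonsingularity of $Q$ on $\K^{d-2}$ prevents any eigenvalue from crossing zero as we push from the interior to the boundary, so the signature is preserved.

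For the reverse direction I would assume conditions (1) and (2) for every choice of $a_1,\dots,a_{d-2}\in\K$. For any nonzero $x\in\K$, $\K$-positivity gives $Qx\in\inter\K$, so by self-duality $\langle y,Qx\rangle>0$ for every nonzero $y\in\K$; symmetry of $Q$ then yields $x^tQy>0$ for all nonzero $x,y\in\K$. Specializing to $a_i\in\inter\K$ gives the positivity and signature conditions of \Cref{def:pls}, so $f\in\SL^d(\K)$, and the nonsingularity together with strict positivity for all $a_i\in\K$ are exactly the extra clauses demanded by the definition of $\iSL^d(\K)$.

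The only mildly delicate step is the continuity-plus-nonsingularity bookkeeping in the forward direction, needed to promote the ``exactly one positive eigenvalue'' condition from $\inter\K$ to all of $\K$; this is the one place where the hypothesis of nonsingularity pays for itself. Everything else is a direct application of the self-dual biduality characterization of $\inter\K$, which is also the reason self-duality is indispensable in this theorem.
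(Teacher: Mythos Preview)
Your proof is correct and follows essentially the same route the paper sketches: the paper's one-line proof just cites \Cref{them:plsbdd} (for the definition of $\iSL^d(\K)$) together with the self-duality observations in \Cref{lem:red1} and \Cref{cor:kpos}, which is exactly the biduality bridge you spell out. Your continuity-plus-nonsingularity argument extending the ``exactly one positive eigenvalue'' condition from $(\inter\K)^{d-2}$ to all of $\K^{d-2}$ is a detail the paper leaves entirely implicit, so you have actually been more careful than the paper here.
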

\begin{proof} 
This follows from \Cref{them:plsbdd} (see also observations in \Cref{lem:red1} and \Cref{cor:kpos}).
\end{proof}

\subsection{Extreme Rays}
Let $\K \subseteq \R^{n}$ be a self-dual cone and $f \in \R[x]_n^d$ be a homogeneous polynomial of degree $d$ in $n$ variables. By the conical Minkowski theorem, a proper convex cone $\K \subseteq \R^{n}$ is generated by its extreme rays. We recall that $\K$-CLC polynomials are a closed set in $\R_n^d[x]$, which is the closure of its interior, which consists of strictly $\K$-CLC polynomials. Therefore, we may ask whether taking directional derivatives with respect to extreme rays is sufficient to guarantee that a polynomial is $\K$-CLC. First we demonstrate that in general, testing $\K$-CLC conditions on extreme rays of $\K$ is not sufficient. 
\begin{example}
Consider the polynomial $f=x_1(x_1x_2+x_1x_3+x_2x_3)+x_4^3$. Then over the nonnegative orthant $D_{e_i}f$ are $\K$-Lorentzian polynomials, but $D_{\1}f$ is not where $\1$ denotes all-ones vector. Consequently, $f$ is not a $\K$-Lorentzian polynomial over the nonnegative orthant.
\end{example}

However, with additional assumptions on the polynomial $f \in \R[x]_n^d$, testing $\K$-CLC conditions on the extreme rays of a self-dual cone $\K$ will be sufficient.

\begin{lemma} \label{prop:irredzero}
Let $f \in \R[x]_n^d$ be a form of degree $d\geq 2$. Let $\K \subset \R^{n}$ be a self-dual cone and $E_{\K}$ denote the set of extreme rays of cone $\K$. If the Hessian matrix $H_{f}(v)$ is $\K$-positive for all $v \in E_{\K}$, then $D_{v_1}D_{v_2}f$ is not identically zero for all $v_1, v_2 \in E_{\K}$.
\end{lemma}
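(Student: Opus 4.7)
The plan is to evaluate $D_{v_1} D_{v_2} f$ at a single well-chosen point and show that the value is strictly positive; this immediately rules out the polynomial being identically zero. The natural choice is to evaluate at any $v_0 \in E_\K$, because the hypothesis gives us control of $H_f(v_0)$.

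The key identity is the standard formula relating repeated directional derivatives to the Hessian: for $a,b \in \R^n$ and $x \in \R^n$,
\[
(D_a D_b f)(x) \;=\; \sum_{i,j} a_i b_j \frac{\partial^2 f}{\partial x_i \partial x_j}(x) \;=\; a^t H_f(x) b.
\]
Fix any $v_0 \in E_\K$ and any $v_1, v_2 \in E_\K$ (each nonzero by definition of an extreme ray). By hypothesis, $H_f(v_0)$ is $\K$-positive, so $H_f(v_0) v_2 \in \inter \K$. Now I invoke self-duality: since $\K = \K^*$, a vector $u$ lies in $\inter \K = \inter \K^*$ if and only if $\langle w, u \rangle > 0$ for every nonzero $w \in \K$. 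Applying this with $u = H_f(v_0) v_2 \in \inter \K$ and $w = v_1 \in \K \setminus \{0\}$ gives
\[
(D_{v_1} D_{v_2} f)(v_0) \;=\; v_1^t H_f(v_0) v_2 \;=\; \langle v_1,\, H_f(v_0) v_2 \rangle \;>\; 0.
\]
Hence $D_{v_1} D_{v_2} f$ takes a strictly positive value at $v_0$, so it cannot be the zero polynomial.

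There is no real obstacle here; the argument is a one-line consequence of the Hessian identity, the definition of $\K$-positivity, and the characterization of $\inter \K^*$ in terms of strictly positive pairings with nonzero elements of $\K$. The only thing worth remarking on is that self-duality of $\K$ is used in an essential way: without $\K = \K^*$ one would only know $H_f(v_0) v_2 \in \inter \K$, which does not by itself guarantee a positive pairing with an arbitrary extreme ray direction $v_1$.
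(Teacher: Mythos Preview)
Your proof is correct and is essentially the same argument as the paper's, which observes that $D_{v_1}D_{v_2}f(x)=v_2^t H_f(x) v_1$ and notes that this cannot vanish at $x=v_i\in E_\K$ by $\K$-positivity of $H_f(v_i)$. You phrase it directly rather than by contradiction and make the role of self-duality explicit, but the content is identical.
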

\begin{proof} 
If  $D_{v_1}D_{v_2}f \equiv 0$ then we have  $v_2^{t}H_f(x)v_1=0$ for all $x \in \R^{n}$. This contradicts $\K$-positivity of $H_{f}(v_i)$.
\end{proof}
\begin{remark}
The above result does not hold if we only require that $H_{f}(v)$ is $\K$-irreducible for all $v \in E_k$. For instance, consider $f=x_1x_2+x_1x_3$ with $\K$ the nonnegative orthant in $\R^3$. While the Hessian matrices are nonsingular and irreducible, we have $\partial_2\partial_3f$ is identically zero. 
\end{remark}
\begin{theorem}
     Let $f \in \R[x]_n^d$ be a form of degree $d \geq 3$, and $\K$ be a self-dual cone where $E_{\K}$ denotes the set of its extreme rays. Suppose that the Hessian $H_f(v)$ is nonsingular and $\K$-positive for all $v \in E_{\K}$. If $D_{v}f$ is $\K$-CLC for all $v\in E_{\K}$, then $f$ is $\K$-CLC.
\end{theorem}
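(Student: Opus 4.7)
The plan is to prove that $f$ is $\K$-Lorentzian, which by \Cref{them:plsclc} is equivalent to being $\K$-CLC. The central intermediate claim will be that $D_a f \in \SL^{d-1}(\K)$ for \emph{every} $a \in \K$, not merely for $a \in E_\K$. Once this is established, the theorem follows: for any $a_1 \in \inter \K$, the polynomial $g = D_{a_1} f$ lies in $\SL^{d-1}(\K)$, so by \Cref{def:pls} applied to $g$, for any $a_2,\dots,a_{d-2} \in \inter \K$ the quadratic form $D_{a_2}\cdots D_{a_{d-2}} g = D_{a_1}\cdots D_{a_{d-2}} f$ satisfies conditions (1) and (2), which is exactly what $f \in \SL^d(\K)$ requires.

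To establish the intermediate claim, I would bootstrap from two extreme rays to arbitrary conic combinations using \Cref{them:sum}. Given $v_1, v_2 \in E_\K$, apply \Cref{them:sum} to the $\K$-Lorentzian polynomials $D_{v_1} f$ and $D_{v_2} f$ with the witnesses $b = v_2$ and $c = v_1$ in $\K$: commutativity of mixed partials gives $D_b(D_{v_1} f) = D_{v_2} D_{v_1} f = D_{v_1} D_{v_2} f = D_c(D_{v_2} f)$, and \Cref{prop:irredzero} (which applies because $H_f(v_i)$ is $\K$-positive) ensures this common polynomial is nonzero. Thus $D_{v_1+v_2} f = D_{v_1} f + D_{v_2} f \in \SL^{d-1}(\K)$. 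I would then induct on $j$ to show $D_{v_1+\cdots+v_j} f \in \SL^{d-1}(\K)$ for any $v_1,\dots,v_j \in E_\K$: at step $j$, apply \Cref{them:sum} to $F_{j-1} = D_{v_1+\cdots+v_{j-1}} f$ and $D_{v_j} f$ with $b = v_j$ and $c = v_1+\cdots+v_{j-1} \in \K$. By the conical Minkowski theorem, any $a \in \K$ is a finite conic combination of extreme-ray vectors (and positive rescalings of extreme-ray vectors are again extreme-ray vectors), so the intermediate claim will follow.

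The main obstacle is verifying the non-vanishing hypothesis $D_{v_j} F_{j-1} \not\equiv 0$ at each inductive step, which expands as $\sum_{i=1}^{j-1} D_{v_j} D_{v_i} f$. I would argue this as follows: each summand $D_{v_j} D_{v_i} f$ is a directional derivative in a direction $v_j \in \K$ of the $\K$-Lorentzian polynomial $D_{v_i} f$, and taking directional derivatives in interior directions preserves membership in $\SL^{\bullet}(\K)$ directly from \Cref{def:pls}, while boundary directions follow by approximation and closure of $\SL^{d-2}(\K)$ (\Cref{them:plsbdd}); hence $D_{v_j} D_{v_i} f \in \SL^{d-2}(\K)$. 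By \Cref{prop:irredzero} none of these summands is identically zero, and a short induction on degree using Euler's identity $D_a h(a) = \deg(h)\cdot h(a)$ together with condition (2) of \Cref{def:pls} shows that any nonzero $\K$-Lorentzian polynomial of positive degree is strictly positive on $\inter \K$. Thus each summand is nonnegative on $\K$ and strictly positive somewhere in $\inter \K$, so the sum cannot vanish identically, which is precisely the hypothesis needed to iterate \Cref{them:sum} and close the induction.
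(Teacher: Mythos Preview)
Your proposal is correct and follows essentially the same route as the paper: write $a\in\K$ as a finite conic combination of extreme-ray vectors, then iteratively apply \Cref{them:sum} to the $\K$-Lorentzian polynomials $D_{v_i}f$, using \Cref{prop:irredzero} for the non-vanishing witnesses, to conclude $D_a f\in\SL^{d-1}(\K)$ for all $a\in\K$. Your exposition is in fact more careful than the paper's at the inductive step, where you correctly observe that the needed non-vanishing of $D_{v_j}F_{j-1}=\sum_{i<j}D_{v_j}D_{v_i}f$ does not follow from a single invocation of \Cref{prop:irredzero} but from the fact that each summand is a nonzero $\K$-Lorentzian polynomial and hence strictly positive on $\inter\K$; the paper glosses over this point.
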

\begin{proof}
Let $a \in \K$ and 
write $a = \sum_{i=1}^{m} c_{i}v_{i}$ for real positive $c_i$ and $v_i\in E_\K$. Then we have $D_af = \sum_{i=1}^{m} c_{i}D_{v_{i}}f$, and $D_{v_{j}}D_{v_{i}}f=D_{v_{i}}D_{v_{j}}f$. Now by \Cref{prop:irredzero} all $D_{v_{j}}D_{v_{i}}f$ are nonzero polynomials and we can apply \Cref{them:sum} to see that $D_a f$ is $\K$-CLC for all $a\in \K$. This implies that $f$ is $\K$-CLC by the equivalence between $\K$-Lorentzian and $\K$-CLC polynomials.
\end{proof}

Finally, we provide an alternative proof of another necessary and sufficient condition first proved in \cite[Theorem 8.15]{Leake} for $f \in \R[x]_n^d$ to be a $\K$-Lorentzian polynomial. We demonstrate that testing conditions in \Cref{def:pls} for directional derivatives along the extreme rays and one more additional point in the interior actually suffices.

\begin{lemma} \label{lemma:intextreme}
Let $f \in \R[x]_n^d$ be a form of degree $d \geq 2$, and let $\K$ be a proper convex cone where $E_{\K}$ denotes the set of its extreme rays, and $a\in \inter \K$. If $D_{a}f$ and $D_{v_{i}}f$ are $\K$-CLC, then the sum $D_{a+v}f$ for any $v \in E_{\K}$ is also $\K$-CLC.
\end{lemma}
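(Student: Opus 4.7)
The plan is to apply Theorem~\ref{them:sum} to the decomposition $D_{a+v}f = D_af + D_vf$. By hypothesis both summands are $\K$-CLC, hence $\K$-Lorentzian of degree $d-1$ by Theorem~\ref{them:plsclc}. Theorem~\ref{them:sum} will yield that the sum is $\K$-Lorentzian provided we can exhibit $b,c \in \K$ with $D_b(D_af) = D_c(D_vf) \not\equiv 0$.

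The natural match is $b=v$ and $c=a$, which both lie in $\K$ (since $v \in E_{\K}$ and $a \in \inter \K$). Symmetry of mixed partial derivatives hands us the required equality for free:
\[
D_v(D_af) \;=\; D_vD_af \;=\; D_aD_vf \;=\; D_a(D_vf).
\]
For nonvanishing, we split on whether $D_vf$ is zero. If $D_vf \equiv 0$, then $D_{a+v}f = D_af$ is already $\K$-CLC by hypothesis and there is nothing to prove. Otherwise $D_vf$ is a nonzero $\K$-Lorentzian form of degree $d-1$, and every nonzero $\K$-Lorentzian form is strictly positive on $\inter \K$ (for $d-1\geq 2$ this follows from the positivity in condition (2) of Definition~\ref{def:pls} by choosing all direction vectors equal to a given interior point together with iterated Euler identity; for $d-1=1$ it is because a nonzero linear form nonnegative on $\K$ cannot vanish on $\inter \K$; for $d-1 = 0$ it is trivial). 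Euler's identity applied to the degree $d-1$ form $D_vf$ then gives
\[
D_a(D_vf)(a) \;=\; (d-1)\,(D_vf)(a) \;>\; 0,
\]
so $D_a(D_vf)\not\equiv 0$, and Theorem~\ref{them:sum} delivers that $D_{a+v}f$ is $\K$-Lorentzian, equivalently $\K$-CLC.

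There is no real obstacle to this argument. The apparent difficulty---matching the two summands by finding $b,c\in\K$ with $D_b(D_af) = D_c(D_vf)$---evaporates because the symmetric pairing $(b,c) = (v,a)$ forces equality from the commutativity of directional derivatives. The only modest bookkeeping is isolating the degenerate case $D_vf\equiv 0$, in which Theorem~\ref{them:sum} is not applicable but the conclusion is immediate.
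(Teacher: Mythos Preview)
Your proof is correct and follows essentially the same route as the paper's: decompose $D_{a+v}f = D_af + D_vf$, dispose of the trivial case $D_vf\equiv 0$, and otherwise apply Theorem~\ref{them:sum} with the symmetric choice $(b,c)=(v,a)$, using Euler's identity to certify $D_a(D_vf)(a)>0$. The paper's version is terser but the logic is identical.
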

\begin{proof} We observe that for any non-zero $\K$-CLC polynomial $g$ we have that $D_{a}g$ is a nonzero polynomial for all $a \in \inter \K$.
If  $D_{v}f$ is identically zero, then the statement follows, so we may assume that $D_{v}f$ is a nonzero polynomial. Since $D_{v}f$ is $\K$-CLC, we see that $D_{v}D_af=D_aD_{v}f$ is a nonzero polynomial. We now conclude that the sum $D_{v+a}f=D_v f+D_a f$ is $\K$-CLC by \Cref{them:sum}.
\end{proof}

\begin{theorem} \label{them:extreme}
Let $f \in \R[x]_n^d$ be a form of degree $d \geq 2$, and let $\K$ be a proper convex cone where $E_{\K}$ denotes the set of its extreme rays, and $a\in \inter \K$. Then $f$ is $\K$-Lorentzian if and only if for all $v_1,\dots,v_{d-2} \in E_{\K}$ and all $0 \leq k \leq d-2$, the quadratic form $D_{v_1}D_{v_2} \dots D_{v_{k}}D_{a}^{d-2-k}f$ is $\K$-Lorentzian.
\end{theorem}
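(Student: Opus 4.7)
The plan is to prove both implications, with the substantive content in $(\Leftarrow)$. For $(\Rightarrow)$, since $\SL^{d-1}(\K)$ is closed by \Cref{them:plsbdd}, if $f \in \SL^d(\K)$ then $D_b f \in \SL^{d-1}(\K)$ for every $b \in \inter \K$ by \Cref{def:pls}, and a limit argument $b + \epsilon a \to b$ extends this to $b \in \K$; iterating produces Lorentzian quadratics of the stated shape.

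For $(\Leftarrow)$ I induct on $d$, with $d = 2$ tautological. Suppose $d \geq 3$ and the result holds in degree $d - 1$. First I apply the inductive hypothesis to $g = D_a f$ and to $g = D_v f$ (for each $v \in E_\K$), both of degree $d-1$: after commuting derivatives past the outer $D_v$ or $D_a$ and reindexing the $k$-parameter, the conditions the inductive hypothesis requires of $g$ form a subset of those assumed for $f$. Hence $D_a f$ and $D_v f$ are $\K$-Lorentzian, equivalently $\K$-CLC by \Cref{them:plsclc}, for every $v \in E_\K$.

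Next I iterate \Cref{lemma:intextreme}: from $D_a f$ and $D_{v_1} f$ both $\K$-CLC, the lemma gives $D_{a + v_1} f \in \CL^{d-1}(\K)$; since $a + v_1 \in \inter \K$, applying the lemma again with $a + v_1$ as the interior point yields $D_{a + v_1 + v_2} f$ is $\K$-CLC, and so on. After absorbing positive scalings along extreme rays (directly via \Cref{them:sum} applied to the identity $D_{c_i v_i} D_a f = D_a D_{c_i v_i} f$), one obtains that $D_{c_0 a + \sum_i c_i v_i} f$ is $\K$-CLC for any $c_0 > 0$, $c_i \geq 0$, and $v_i \in E_\K$. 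For an arbitrary $b \in \inter \K$, openness of $\inter \K$ furnishes $\epsilon > 0$ with $b - \epsilon a \in \K$, and the conical Minkowski theorem writes $b - \epsilon a = \sum c_i v_i$ with $c_i \geq 0$, $v_i \in E_\K$; hence $b = \epsilon a + \sum c_i v_i$ falls under the previous step, so $D_b f$ is $\K$-CLC for every $b \in \inter \K$.

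To promote this to $f \in \CL^d(\K) = \SL^d(\K)$ it remains to show $f$ itself is log-concave on $\inter \K$. The $k = 0$ hypothesis gives $D_a^{d-2} f$ Lorentzian, so its positivity condition (\Cref{remark:defequiv}) forces $D_a^d f > 0$, whence $f(a) = D_a^d f / d! > 0$ by Euler's identity. Since $D_a f$ is log-concave at $a$ by the previous paragraph and $d \geq 3$, \Cref{prop:logderivative} gives $f$ log-concave at $a$ for every $a \in \inter \K$. The main obstacle is the bridge from the combinatorial hypothesis (extreme rays plus a single interior point $a$) to the continuous conclusion (all interior directions); the decomposition $b = \epsilon a + (b - \epsilon a)$ combined with the Minkowski theorem is the key that makes one interior point $a$ suffice.
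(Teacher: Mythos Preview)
Your argument follows the same route as the paper's: induct on $d$; use the inductive hypothesis to see that $D_a f$ and each $D_v f$ ($v\in E_\K$) lie in $\SL^{d-1}(\K)$; then iterate \Cref{lemma:intextreme} along a Minkowski decomposition $b=\lambda a+\sum c_i v_i$ to obtain $D_b f\in\SL^{d-1}(\K)$ for every $b\in\inter\K$. The paper stops right there, because that is already the content of $f\in\SL^d(\K)$ by \Cref{def:pls}: any quadratic $D_{a_1}\cdots D_{a_{d-2}}f$ with $a_i\in\inter\K$ is obtained by taking $d-3$ further interior-direction derivatives of $D_{a_1}f\in\SL^{d-1}(\K)$.

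Your final paragraph, which argues log-concavity of $f$ on $\inter\K$ directly in order to land in $\CL^d(\K)$, is therefore unnecessary, and as written it contains a slip: the computation $f(a)=D_a^d f/d!>0$ treats only the \emph{fixed} interior point $a$ of the hypothesis, yet you then conclude ``$f$ log-concave at $a$ for every $a\in\inter\K$''. To salvage that route you would need $f(b)>0$ at each $b\in\inter\K$; this can be done (the $k=0$ hypothesis gives $D_b^2 D_a^{d-2}f>0$, so $D_b f\not\equiv 0$, hence $(D_b f)(b)>0$ since $D_b f$ is nonzero $\K$-CLC, hence $f(b)>0$), but it is simpler to drop the paragraph and invoke \Cref{def:pls} directly as the paper does. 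A minor point: your treatment of the scalars $c_i$ via \Cref{them:sum} is more than needed, since $c_i v_i$ still lies on an extreme ray and \Cref{lemma:intextreme} applies to it verbatim.
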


\begin{proof} $\Rightarrow$ Since $\SL^m(\K)$ is closed for all $m\geq 0$ we see that conditions on directional derivatives in directions of extreme rays and $a$ are just a subset of conditions defining $\K$-Lorentzian polynomials.

$\Leftarrow$  
We induct on the degree $d$ of $f$. The base case $d=2$ is trivially true since we do not take any directional derivatives.

For $d \geq 3$ we assume that the result is true up to degree $d-1$. It suffices to show that $D_b f \in \SL^{d-1}(\K)$ for all $b \in \inter \K$. We can write any $b\in \inter \K$ as a positive linear combination of $a$ and extreme rays $v_i$, where the coeficient of $a$ is strictly positive: $b=\lambda a+ \sum_{i=1}^{m}c_i v_i$ with $\lambda, c_i >0$, and $v_{i} \in E_{\K}$. 
 
Now we iteratively apply \Cref{lemma:intextreme} to $\lambda a+c_1v_1$, $(\lambda a+c_1v_1) +c_2v_2$, $(\lambda a+c_1v_1 +c_2v_2)+c_3v_3$ and so on, since $\lambda a+ \sum c_i v_i$ always lies in the interior of $\K$.

\end{proof}

An immediate corollary provides a characterization of Lorentzian polynomials which is first proved in \cite[Corollary 8.16]{Leake}.

\begin{corollary}
    Let $f \in \R[x]_n^d$ be a form of degree $d \geq 2$ and $\1$ denote the all-ones vector. Then $f$ is Lorentzian if and only if for all $i_1,\dots,i_{d-2} \in [n]$ and all $0 \leq k \leq d-2$, the quadratic polynomial $\partial _{i_1} \dots \partial_{i_k} D^{d-2-k}_{\1}f$ is Lorentzian.
\end{corollary}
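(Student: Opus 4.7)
The plan is to derive this corollary as a direct specialization of \Cref{them:extreme} to the nonnegative orthant. Take $\K = \R^n_{\geq 0}$, which is a proper convex cone whose extreme rays are spanned by the standard basis vectors, so $E_{\K} = \{e_1, \dots, e_n\}$ (up to positive scaling). Pick the interior point $a = \1$, noting that indeed $\1 \in \inter \R^n_{\geq 0}$.

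The next step is to translate notation. For the standard basis vector $e_i$, the directional derivative $D_{e_i}$ coincides with the partial derivative $\partial_i$. Hence for any choice of indices $i_1, \dots, i_k \in [n]$ and any $0 \leq k \leq d-2$, the operator $D_{v_1} \cdots D_{v_k} D_a^{d-2-k}$ appearing in \Cref{them:extreme} (with $v_j = e_{i_j}$ and $a = \1$) becomes precisely $\partial_{i_1} \cdots \partial_{i_k} D_\1^{d-2-k}$.

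With this identification, applying \Cref{them:extreme} gives: $f$ is $\R^n_{\geq 0}$-Lorentzian (i.e., Lorentzian in the standard sense) if and only if $\partial_{i_1} \cdots \partial_{i_k} D_\1^{d-2-k} f$ is Lorentzian for all such choices, which is exactly the statement of the corollary. There is no real obstacle here; the whole content is already packaged in \Cref{them:extreme}, and this corollary is just the translation of that theorem into the canonical coordinate language of the nonnegative orthant. One only has to remark that every $a \in \inter \R^n_{\geq 0}$ can indeed be written as a positive combination of $\1$ and the $e_i$'s with strictly positive coefficient on $\1$ (for instance, $a = \lambda \1 + \sum (a_i - \lambda) e_i$ for any $0 < \lambda < \min_i a_i$), which is the hypothesis needed to invoke \Cref{them:extreme}.
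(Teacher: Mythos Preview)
Your proposal is correct and matches the paper's approach exactly: the paper presents this as an immediate corollary of \Cref{them:extreme} without further argument, and you have simply spelled out the specialization $\K = \R^n_{\geq 0}$, $E_\K = \{e_1,\dots,e_n\}$, $a = \1$. Your final remark about decomposing an arbitrary interior point as $\lambda \1 + \sum (a_i - \lambda)e_i$ is not needed to \emph{apply} \Cref{them:extreme} (that decomposition is used inside the proof of the theorem, not as a hypothesis the user must check), but it does no harm.
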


For a self-dual cone $\K$, using \Cref{them:clcbdd}, we have the following result.
\begin{theorem} \label{clcextreme}
Let $f \in \R[x]_n^d$ be a form of degree $d \geq 2$ and $\K$ be a self-dual cone where $E_{\K}$ denotes the set of its extreme rays. Fix $a \in \inter \K$. Then $f$ is $\K$-Lorentzian if and only if for all $v_1,\dots,v_k \in E_{\K}$ where $0 \leq k \leq d-2$, the matrix $Q$ of the quadratic form $q=D_{v_1}D_{v_2} \dots D_{v_{k}}D_{a}^{d-2-k}f$ satisfies the following two conditions:
    \begin{enumerate}
    \item  $Q$ either nonsingular and $\K$-irreducible, or singular and $\K$-nonnegative. 
        \item $Q$ has exactly one positive eigenvalue.
        \end{enumerate}
\end{theorem}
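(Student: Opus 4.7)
The plan is to prove this by directly combining Theorem \ref{them:extreme} with Corollary \ref{cor:qclcirre} (equivalently, Theorem \ref{them:clcbdd}). Theorem \ref{them:extreme} already establishes that $f \in \SL^d(\K)$ if and only if every quadratic form of the shape $q = D_{v_1}\cdots D_{v_k} D_a^{d-2-k} f$, with $v_i \in E_\K$ and $0 \leq k \leq d-2$, lies in $\SL^2(\K)$. So the only thing left to do is translate membership in $\SL^2(\K)$ into conditions (1) and (2) of the statement, which is exactly what the self-duality hypothesis buys us through Corollary \ref{cor:qclcirre}.

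For the forward direction, I would assume $f \in \SL^d(\K)$. Applying the ($\Rightarrow$) direction of Theorem \ref{them:extreme}, every quadratic form $q = D_{v_1}\cdots D_{v_k}D_a^{d-2-k}f$ with $v_i \in E_\K$ and $0 \leq k \leq d-2$ belongs to $\SL^2(\K)$. Since $\K$ is self-dual, Corollary \ref{cor:qclcirre} applies to each such $q$: its matrix $Q$ must have exactly one positive eigenvalue, and it must be either nonsingular and $\K$-irreducible or singular and $\K$-nonnegative. This yields conditions (1) and (2).

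For the converse, I would assume conditions (1) and (2) hold for every quadratic form $q = D_{v_1}\cdots D_{v_k}D_a^{d-2-k}f$. The ($\Leftarrow$) direction of Corollary \ref{cor:qclcirre} then tells us that each such $q$ lies in $\SL^2(\K)$. Invoking the ($\Leftarrow$) direction of Theorem \ref{them:extreme} now gives $f \in \SL^d(\K)$, completing the proof.

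There is no real obstacle here, since the hard content (reducing $\K$-Lorentzianity to verification on extreme rays and one interior point, and translating quadratic $\K$-Lorentzianity into the $\K$-irreducible/$\K$-nonnegative dichotomy) has already been done in Theorems \ref{them:extreme} and \ref{them:clcbdd}. The only minor subtlety worth flagging is making sure both implications of Corollary \ref{cor:qclcirre} are used (the singular-but-$\K$-nonnegative case must be carried through, not just the irreducible case), so that no nontrivial $\SL^2(\K)$ quadratics are silently excluded. The proof is essentially a one-line citation of the two theorems.
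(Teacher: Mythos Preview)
Your proposal is correct and matches the paper's approach exactly: the paper presents this theorem as an immediate consequence of Theorem~\ref{them:extreme} together with the quadratic self-dual characterization (Theorem~\ref{them:clcbdd}/Corollary~\ref{cor:qclcirre}), without giving any further argument. Your write-up simply spells out this combination in both directions, which is precisely what is intended.
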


As a consequence we provide another alternative necessary and sufficient conditions for Lorentzian polynomials.
 \begin{corollary}
Let $\1 \in \R^{n}_{>0}$ denote the all-ones vector.
$f$ is Lorentzian over $\R^{n}_{\geq 0}$ if and only if  for all $i_1,\dots,i_{d-2} \in [n]$ and $a \in \R^{n}_{>0}, 0 \leq k \leq d-2$, the matrix $Q$ of the quadratic form $q=\partial_{i_1} \dots \partial_{i_k} D_{\1}^{d-2-k}f$ satisfies the following two conditions:
 \begin{enumerate}
 \item $Q$ is either nonsingular and irreducible, or singular and nonnegative. 
        \item $Q$ has exactly one positive eigenvalue.
        \end{enumerate}
\end{corollary}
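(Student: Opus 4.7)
The plan is to derive this corollary as an immediate specialization of \Cref{clcextreme} to the self-dual cone $\K = \R^n_{\geq 0}$. First I would recall three standard facts: (i) the nonnegative orthant is self-dual with respect to the standard inner product, so \Cref{clcextreme} applies; (ii) its extreme rays are precisely the rays spanned by the standard basis vectors $e_1, \ldots, e_n$, so $E_\K = \{e_1, \ldots, e_n\}$ up to scaling and hence $D_{e_i} f = \partial_i f$; and (iii) the all-ones vector $\1$ lies in $\inter \K = \R^n_{>0}$, so it is a valid choice of the interior reference point required by \Cref{clcextreme}.

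With these translations, the iterated directional derivatives $D_{v_1}\cdots D_{v_k} D_a^{d-2-k} f$ of \Cref{clcextreme} become the mixed derivatives $\partial_{i_1} \cdots \partial_{i_k} D_\1^{d-2-k} f$ appearing in the corollary, where each $v_j$ is replaced by $e_{i_j}$ and $a$ is taken to be $\1$. Moreover, for $\K = \R^n_{\geq 0}$, $\K$-nonnegativity of a matrix (in the sense of \Cref{def:k-irre}) is precisely entrywise nonnegativity, and $\K$-irreducibility coincides with the classical Frobenius notion of irreducibility of a nonnegative matrix. Hence conditions (1) and (2) of this corollary are exactly conditions (a) and (b) of \Cref{clcextreme} in this setting, and the corollary follows by a direct application of that theorem.

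The only step that is not purely notational is the verification that $\K$-irreducibility for $\K = \R^n_{\geq 0}$ matches the classical notion of an irreducible nonnegative matrix. This reduces to the observation that the faces of the nonnegative orthant are precisely the coordinate subfaces indexed by subsets $S \subseteq [n]$, namely $\{x \in \R^n_{\geq 0} : x_i = 0 \text{ for } i \notin S\}$, and a nonnegative matrix leaves such a face invariant exactly when its rows and columns can be simultaneously permuted into block upper-triangular form with a nontrivial zero block, which is the classical definition of reducibility. Apart from this dictionary check, there is no substantive obstacle: the proof is simply a translation of \Cref{clcextreme} into orthant language.
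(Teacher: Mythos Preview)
Your proposal is correct and matches the paper's intent: the corollary is stated immediately after \Cref{clcextreme} as a direct consequence, and the paper does not supply a separate proof beyond this remark. Your dictionary translation (self-duality of $\R^n_{\geq 0}$, extreme rays $e_i$, $\mathbf{1}\in\inter\K$, and the identification of $\K$-nonnegativity and $\K$-irreducibility with their classical counterparts) is exactly the specialization the paper has in mind.
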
 

    Note that it is easy to check irreducibility of a nonnegative matrix due to the following result, see \cite{Brualdiirrematrices}.

\begin{theorem}
An $n \times n$ nonnegative matrix $A$ is permutation equivalent to an irreducible matrix if and only if $A$ contains no zero row or zero column.
\end{theorem}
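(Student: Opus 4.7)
The plan is to handle the two directions separately. The forward direction is straightforward: if $A$ has a zero row (the zero-column case being symmetric), then $PAQ$ also has a zero row for every pair of permutation matrices $P,Q$, since row permutations merely reorder rows and column permutations act within each row, so an all-zero row remains all-zero. The corresponding vertex of the digraph of $PAQ$ has no outgoing edges, so the digraph cannot be strongly connected, and $PAQ$ is reducible.

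For the reverse direction, the plan is to translate the problem into bipartite-graph language. Associate to $A$ the bipartite graph $G$ with vertex classes $R=\{R_1,\dots,R_n\}$ and $C=\{C_1,\dots,C_n\}$ and edge $R_iC_j$ whenever $A_{ij}>0$; the hypothesis becomes ``$G$ has no isolated vertex.'' A pair $(P,Q)$ of permutation matrices is the same data as a bijection $\sigma\colon R\to C$, and $PAQ$ is irreducible if and only if the digraph $D_\sigma$ on $R$ defined by $R_i\to R_j$ iff $R_i\sigma(R_j)\in E(G)$ is strongly connected. The task therefore reduces to the following combinatorial claim: any bipartite graph with equal side sizes and no isolated vertex admits a bijection $\sigma$ making $D_\sigma$ strongly connected.

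I would prove this claim by induction on $n$. The base case $n=1$ is immediate (the unique edge becomes a self-loop in $D_\sigma$). For the inductive step, I would select an edge $R_iC_j\in E(G)$ such that deleting the vertices $R_i$ and $C_j$ leaves a bipartite graph $G'$ still free of isolated vertices; apply the inductive hypothesis to $G'$ to produce $\sigma'\colon R\setminus\{R_i\}\to C\setminus\{C_j\}$ with $D_{\sigma'}$ strongly connected; then extend by $\sigma(R_i)=C_j$. The added vertex $R_i$ inherits outgoing edges to each $R_k$ with $\sigma(R_k)\in N_G(R_i)$ and incoming edges from each $R_\ell\in N_G(C_j)$, which attach $R_i$ to the strongly connected $D_{\sigma'}$ from both sides and thus preserve strong connectivity.

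The main obstacle is guaranteeing the existence of a ``safe'' edge $R_iC_j$ whose deletion preserves the no-isolated-vertex property. In pathological cases where every edge of $G$ has a degree-one endpoint on the other side, $G$ has a very restricted structure (essentially a disjoint union of small stars), and $\sigma$ can be written down directly by cyclically threading a bijection through the components; a related argument handles the case when $G$ is disconnected, where $\sigma$ must rotate between components so that $D_\sigma$ stitches them into a single strongly connected digraph. These case analyses are the technical core of the argument.
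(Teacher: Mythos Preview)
The paper does not actually prove this theorem; it is quoted as a known result with a citation to Brualdi. So there is no ``paper's own proof'' to compare against, and your attempt must be judged on its own.

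Your forward direction is fine. The reverse direction, however, has a genuine gap in the inductive step. You plan to pick an edge $R_iC_j$ whose deletion leaves no isolated vertex, build $\sigma'$ on the rest, and set $\sigma(R_i)=C_j$. For the new vertex $R_i$ to be attached to $D_{\sigma'}$ you need $N_G(R_i)\setminus\{C_j\}\neq\emptyset$ (for an outgoing edge) and $N_G(C_j)\setminus\{R_i\}\neq\emptyset$ (for an incoming edge). Your stated ``safe edge'' condition only controls the degrees of the \emph{other} vertices after deletion; it says nothing about $\deg(R_i)$ or $\deg(C_j)$. So you implicitly also need both endpoints of the chosen edge to have degree at least~$2$.

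More seriously, your exceptional cases do not cover everything that remains. Consider the double star on $n\ge 3$: $R_1$ adjacent to every $C_j$, $C_1$ adjacent to every $R_i$, and no other edges. This graph is connected, and the edge $R_1C_1$ has both endpoints of degree~$n$, so neither of your exceptional cases (``every edge has a degree-one endpoint'' or ``$G$ disconnected'') applies. Yet the only edge with both endpoints of degree $\ge 2$ is $R_1C_1$, and deleting $R_1,C_1$ isolates every remaining vertex, so no safe edge exists either. Your argument as written does not handle this graph, even though the desired $\sigma$ is easy to exhibit (take $\sigma(R_1)=C_1$). The dichotomy ``safe edge exists'' versus ``union of stars / disconnected'' is therefore not exhaustive, and the ``technical core'' you defer is both harder and broader than you suggest.
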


\bibliographystyle{alpha}
\bibliography{main}

\begin{thebibliography}{ALOGV24b}

\bibitem[AD20]{MR4232134}
Nima Anari and Micha\l{} Derezi\'{n}ski.
\newblock Isotropy and log-concave polynomials: accelerated sampling and
  high-precision counting of matroid bases.
\newblock In {\em 2020 {IEEE} 61st {A}nnual {S}ymposium on {F}oundations of
  {C}omputer {S}cience}, pages 1331--1344. IEEE Computer Soc., Los Alamitos,
  CA, [2020] \copyright 2020.

\bibitem[AGV21]{Cynthialog}
Nima Anari, Shayan~Oveis Gharan, and Cynthia Vinzant.
\newblock Log-concave polynomials, {I}: entropy and a deterministic
  approximation algorithm for counting bases of matroids.
\newblock {\em Duke Math. J.}, 170(16):3459--3504, 2021.

\bibitem[ALOGV24a]{MR4681146}
Nima Anari, Kuikui Liu, Shayan Oveis~Gharan, and Cynthia Vinzant.
\newblock Log-concave polynomials {II}: {H}igh-dimensional walks and an {FPRAS}
  for counting bases of a matroid.
\newblock {\em Ann. of Math. (2)}, 199(1):259--299, 2024.

\bibitem[ALOGV24b]{Cynthialog3}
Nima Anari, Kuikui Liu, Shayan Oveis~Gharan, and Cynthia Vinzant.
\newblock Log-concave polynomials {III}: {M}ason's ultra-log-concavity
  conjecture for independent sets of matroids.
\newblock {\em Proc. Amer. Math. Soc.}, 152(5):1969--1981, 2024.

\bibitem[Bar02]{Barvinok}
Alexander Barvinok.
\newblock {\em A course in convexity}, volume~54 of {\em Graduate Studies in
  Mathematics}.
\newblock American Mathematical Society, Providence, RI, 2002.

\bibitem[BCSS98]{Blumcomplexity}
Lenore Blum, Felipe Cucker, Michael Shub, and Steve Smale.
\newblock {\em Complexity and real computation}.
\newblock Springer-Verlag, New York, 1998.
\newblock With a foreword by Richard M. Karp.

\bibitem[BH20]{Brandenlorentzian}
Petter Br{\"a}nd{\'e}n and June Huh.
\newblock Lorentzian polynomials.
\newblock {\em Annals of Mathematics}, 192(3):821--891, 2020.

\bibitem[BL23]{Leake}
Petter Br{\"a}nd{\'e}n and Jonathan Leake.
\newblock Lorentzian polynomials on cones.
\newblock {\em arXiv preprint arXiv:2304.13203}, 2023.

\bibitem[Bru79]{Brualdiirrematrices}
Richard~A Brualdi.
\newblock Matrices permutation equivalent to irreducible matrices and
  applications.
\newblock {\em Linear and Multilinear Algebra}, 7(1):1--12, 1979.

\bibitem[Bv20]{Buckley}
Anita Buckley and Klemen \v{S}ivic.
\newblock New examples of extremal positive linear maps.
\newblock {\em Linear Algebra Appl.}, 598:110--144, 2020.

\bibitem[Chi24]{chin2024real}
Tracy Chin.
\newblock Real stability and log concavity are conp-complete.
\newblock {\em arXiv preprint arXiv:2405.00162}, 2024.

\bibitem[G\.59]{Gaarding}
Lars G\.{a}rding.
\newblock An inequality for hyperbolic polynomials.
\newblock {\em J. Math. Mech.}, 8:957--965, 1959.

\bibitem[G\"97]{Gulerhypint}
Osman G\"{u}ler.
\newblock Hyperbolic polynomials and interior point methods for convex
  programming.
\newblock {\em Math. Oper. Res.}, 22(2):350--377, 1997.

\bibitem[Gan98]{gantmacher}
F.~R. Gantmacher.
\newblock {\em The theory of matrices. {V}ol. 1}.
\newblock AMS Chelsea Publishing, Providence, RI, 1998.
\newblock Translated from the Russian by K. A. Hirsch, Reprint of the 1959
  translation.

\bibitem[GL23]{selfdual}
Jo\~{a}o Gouveia and Bruno~F. Louren\c{c}o.
\newblock Self-dual polyhedral cones and their slack matrices.
\newblock {\em SIAM J. Matrix Anal. Appl.}, 44(3):1096--1121, 2023.

\bibitem[HJ13]{Hornmatrix}
Roger~A. Horn and Charles~R. Johnson.
\newblock {\em Matrix analysis}.
\newblock Cambridge University Press, Cambridge, second edition, 2013.

\bibitem[HW17]{Huhenumeration}
June Huh and Botong Wang.
\newblock Enumeration of points, lines, planes, etc.
\newblock {\em Acta Math.}, 218(2):297--317, 2017.

\bibitem[LS75]{Loewyice}
Raphael Loewy and Hans Schneider.
\newblock Positive operators on the {$n$}-dimensional ice cream cone.
\newblock {\em J. Math. Anal. Appl.}, 49:375--392, 1975.

\bibitem[PT07]{Polik2007survey}
Imre P\'{o}lik and Tam\'{a}s Terlaky.
\newblock A survey of the {S}-lemma.
\newblock {\em SIAM Rev.}, 49(3):371--418, 2007.

\bibitem[Van68]{Vandergraftspectral}
James~S. Vandergraft.
\newblock Spectral properties of matrices which have invariant cones.
\newblock {\em SIAM J. Appl. Math.}, 16:1208--1222, 1968.

\end{thebibliography}
\end{document}